%28 October 2021

\documentclass[a4paper,12pt,leqno]{amsart} 
\usepackage{amsmath,amsthm,amssymb}  
\usepackage{mathdots}
\usepackage{graphicx}

\setlength{\parskip}{6pt}
\setlength{\parindent}{10pt}
\numberwithin{equation}{section}

%theorems etc ("plain" style)                          
\theoremstyle{plain}
%numbered
\newtheorem{theorem}{Theorem}[section]
\newtheorem{proposition}[theorem]{Proposition}

\newtheorem{definition}[theorem]{Definition}  
%not numbered
%\newtheorem{definition}{Definition}
%\renewcommand{\thedefinition}{\!\!}  

%theorems etc ("definition" style)  
\theoremstyle{definition}  
%numbered 
\newtheorem{example}[theorem]{Example} 
\newtheorem{remark}[theorem]{Remark} 
%not numbered 
%\newtheorem{remark}{Remark}  
%\renewcommand{\theremark}{\!\!} 

%Bbb
\newcommand{\C}{\mathbb C}   
\newcommand{\R}{\mathbb R}
\newcommand{\Z}{\mathbb Z}

%Greek
\newcommand{\al}{\alpha}
\newcommand{\be}{\beta} 
\newcommand{\ga}{\gamma}
\newcommand{\de}{\delta}
\newcommand{\la}{\lambda}
\newcommand{\si}{\sigma} 

\newcommand{\eps}{\epsilon}
\newcommand{\Om}{\Omega}
\newcommand{\De}{\Delta}
\renewcommand{\th}{\theta}
\newcommand{\om}{\omega}

%mathoperator
\DeclareMathOperator{\tr}{tr}
\DeclareMathOperator{\rank}{rank}
\DeclareMathOperator{\diag}{diag}

\DeclareMathOperator{\End}{End}

\DeclareMathOperator{\Ker}{Ker}

\DeclareMathOperator{\Span}{Span}

\DeclareMathOperator{\Ad}{Ad}
\DeclareMathOperator{\ad}{ad}

%Notation: Lie groups and algebras
\newcommand{\SL}{\textrm{SL}}
\renewcommand{\sl}{\frak s\frak l}

\newcommand{\g}{{\frak g}}
\newcommand{\h}{{\frak h}}

%misc
\newcommand{\no}{\noindent}
      
\newcommand{\pr}{\prime} 
\newcommand{\prr}{{\prime\prime}} 
\newcommand{\prrr}{{\prime\prime\prime}} 
\newcommand{\st}{\ \vert\ }   
\renewcommand{\ll}{\lq\lq}
\newcommand{\rr}{\rq\rq\ }
\newcommand{\rrr}{\rq\rq}  
\renewcommand{\b}{\partial}

\newcommand{\bp}{\begin{pmatrix}} 
\newcommand{\ep}{\end{pmatrix}} 
\newcommand{\bsp}{\left(\begin{smallmatrix}} 
\newcommand{\esp}{\end{smallmatrix}\right)}

%misc symbols

\renewcommand{\i}{ {\scriptscriptstyle\sqrt{-1}}\, }
\newcommand{\ii}{ {\scriptstyle\sqrt{-1}}\, }

%%% caligraphic

%apposition data

\begin{document}     

\title[Polytopes]{Polytopes, supersymmetry, and integrable systems
}  
   
\author{Martin A. Guest
and Nan-Kuo Ho}    

\date{}   

\maketitle 

\begin{abstract} We review some links between Lie-theoretic polytopes and field theories in physics, which were proposed in the 1990's.  A basic ingredient is the Coxeter Plane, whose relation to integrable systems and the Stokes Phenomenon has only recently come to light.  We use this to give a systematic mathematical treatment, which gives further support to the physical proposals.
This article is based on a talk which was scheduled to be given at the 
workshop \ll Representations of Discrete Groups and Geometric 
Topology on Manifolds\rrr, Josai University, 12-13 March 2020.  
\end{abstract}

\section{Introduction}\label{1} 

Many links between modern physics and geometry have been discovered in the process of \ll abstracting\rr physical ideas, i.e.\ by inventing (purely mathematical) models in order to test conjectures and look for potentially helpful mathematical techniques.  Zamolodchikov's investigation of massive perturbations of supersymmetric conformal field theories in the 1980's was an example; the vastly overdetermined nature of the problem suggested the possibility that mathematically consistent models could sometimes be determined by symmetry considerations alone.  

One such case was his analysis of a \ll model theory\rr whose symmetries are governed by the exceptional Lie algebra of type $E_8$, and whose perturbations are governed by the sine-Gordon equation.  In this model there are 8 particles, and the masses of the particles (up to an overall scale factor) were predicted from the structure of the $E_8$ root system.

While the physical value of such a model may be open to debate, the nontriviality of the mathematics involved suggests a phenomenon at the interface of physics and mathematics which calls for deeper understanding. The mathematical value of such physical abstractions is, after countless instances, not in any doubt.  
In this spirit, we revisit the situation of Zamolodchikov's example, in order to add more fuel to the fire (so to speak).  

First, we mention some history, with references to the physics literature. 
The general context for models linking particle data with  Lie theory can be found in the work of several groups.   It rests on the foundational ideas linking conformal field theory and the theory of integrable systems, due to Zamolodchikov \cite{Za89} and his coauthors.  The role of Toda theory (a generalization of sine-Gordon) was investigated by Hollowood-Mansfield \cite{HoMa89} and by Eguchi-Yang \cite{EgYa89}.
Specific relations between roots of Lie algebras and the physics of affine Toda theory were observed by Freeman \cite{Fr91}. In particular this work made use of Kostant's results \cite{Ko59} on three-dimensional subalgebras and exponents of Lie algebras. 
This was taken further by Braden et al.\  \cite{BCDS90} and especially by
Dorey \cite{Do91},\cite{Do92}.  
In this work, a remarkable coincidence was observed between a \ll bootstrap prediction\rr of scattering matrices and the existence of a finite-dimensional Lie algebra. 
An extensive review was given by Corrigan  \cite{Co99}.
This direction was primarily algebraic, with a view towards
quantum aspects. 

A related series of developments, with similar ingredients but somewhat different ouput,  was the work of Fendley et al.\   \cite{FLMW91}, and the work of Lerche-Warner  \cite{LeWa91} on polytopic models.  

The classical equations of motion (such as the Toda equations) started to play a more significant role in the work of Cecotti-Vafa \cite{CeVa91},\cite{CeVa92} in which they introduced their topological-antitopological fusion equations (tt* equations) and made specific conjectures regarding the (physically) expected properties of the solutions of the equations.
Of particular interest to geometers were the solutions conjectured to represent deformations of specific conformal data (such as the quantum cohomology rings of specific K\"ahler manifolds).  

Let us introduce some notation.
A central role in this story is played by the root system of a (complex) simple Lie algebra $\g$.  With respect to a fixed Cartan subalgebra $\h$, we have the set of roots $\De$, which is a subset of the dual space $\h^\ast$, and the Weyl group $W$, which acts as a reflection group (Coxeter group) on $\h$, and on $\h^\ast$. 
The deeper structure of these objects was elucidated by Kostant and by Steinberg in several foundational papers in the 1950's and 1960's.  It is this structure which was exploited in the physics models mentioned above.  Zamolodchikov's $E_8$ model subsequently made headlines when experimental evidence was found (see \cite{BoGa11}), and this prompted Kostant to update some aspects of his work from 50 years earlier, in \cite{Ko10}.  

The mechanism at work in the situation of Freeman and Dorey is this: the Coxeter element $\ga$ acts on the finite set $\De$, and each orbit corresponds to a \ll particle\rr (or field excitation).  The mass of the particle is the length of the projection (of any point of the orbit) on a certain real plane in $\h^\ast$, called the Coxeter Plane.  Other quantities such as scattering matrices are encoded in the root data in a physically consistent way.

In the situation of Lerche-Warner, the mechanism depends on a choice of representation
$\th:\g \to \End(V)$. Weight vectors in $V$ correspond to \ll vacuum states\rrr, and \ll solitonic particles\rr correspond to certain edges of the weight polytope (i.e.\ the polytope in $\h^\ast$ whose vertices are the weights).  

This is the background to the project described in this talk, which concerns the underlying differential equations.  These are of two types. The first (section \ref{2}) is a rather elementary linear o.d.e., a generalization of the Bessel equation, which nevertheless exhibits the crucial link with Lie theory and the Coxeter Plane. The relation with \ll particles\rr is described in section \ref{3}, although this is simply an observation; the o.d.e.\ itself does not appear to have any physical origin. The second  is a nonlinear p.d.e., a version of the two-dimensional affine Toda equations, which is a special case of 
the tt* equations of Cecotti-Vafa. It is the tt* equations, and their \ll integrability\rrr, which truly provide the link with physics. 
In section \ref{4} we explain this, and also how it is related to the elementary linear o.d.e.\ 
of section \ref{2}. 

We have discussed solutions of these \ll tt*-Toda equations\rr in more detail in a separate article \cite{Gu21}, and
in much more detail in earlier work \cite{GuLi14},\cite{GIL1},\cite{GIL2},\cite{GIL3},\cite{GH1},\cite{GH2}.  Here we focus on the Lie-theoretic aspects, and the 
\[
\text{particle $\ \ \longleftrightarrow\ \ $ Coxeter orbit}
\]
correspondence. Our main purpose is to show how the {\em solutions} of
the equations provide more evidence for the original physics proposals. We give some
examples related to geometry to illustrate the relevance of these ideas for mathematicians.

\section{The Coxeter Plane (and complex o.d.e.\ theory)}\label{2}

\subsection{The Coxeter Plane}\label{2.1}\ 

Let $\g$ be a complex simple Lie algebra, with corresponding simply-connected Lie group $G$.
Let $\al_1,\dots,\al_l\in\h^\ast$ be a choice of simple roots of $\g$ with respect to the Cartan subalgebra $\h$.  (Thus $\rank\g=\dim\h=l$.) The Weyl group $W$ is the finite group generated by the reflections $r_\al$ in all root planes $\ker\al$, $\al\in\De$.  The Coxeter element is the element $\ga=r_{\al_1}\dots r_{\al_l}$ of $W$. Its conjugacy class is independent of the choice and ordering of the simple roots.  Its order is called the Coxeter number of $\g$, and we denote it by $s$.  
Amongst many other results, the following was proved by Kostant in \cite{Ko59}:

\begin{theorem} The Coxeter element $\ga$ acts on the set of roots $\De$ with $l$ orbits, each containing $s$ elements.
\end{theorem}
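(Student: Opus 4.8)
The plan is to combine a global count of roots with a rigidity statement coming from the Coxeter Plane. I would first record the standard identity $|\De| = sl$ (the number of roots equals the rank times the Coxeter number). Granting this, and noting that $\ga^s = 1$ forces every $\ga$-orbit in $\De$ to have cardinality dividing $s$, it suffices to prove that \emph{every orbit has exactly $s$ elements}: the number of orbits is then $|\De|/s = l$. Equivalently, I must show that $\ga$ acts \emph{freely} on $\De$, i.e.\ no root is fixed by a nontrivial power $\ga^k$ with $0 < k < s$.

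The key device is a single eigenvector detecting this freeness. Since $1$ is always the smallest exponent of $\g$, the number $\om = e^{2\pi\i/s}$ is an eigenvalue of $\ga$ acting on $\h$; choose $z \in \h$ with $\ga z = \om z$. The crux is the \emph{regularity} of $z$, namely the claim that $\al(z) \ne 0$ for every $\al \in \De$. Granting regularity, suppose $\ga^k\al = \al$; evaluating on $z$ and using the contragredient relation $(\ga^k\al)(z) = \al(\ga^{-k}z) = \om^{-k}\,\al(z)$ gives $\om^{-k}\al(z) = \al(z)$, so $\om^{k} = 1$ and hence $s \mid k$. Thus the stabiliser of each root in $\langle\ga\rangle$ is trivial and every orbit has exactly $s$ elements, as required.

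The main obstacle is precisely this regularity claim, and it is here that the fine structure of $\ga$ enters. I would colour the Dynkin diagram bipartitely, writing $\{1,\dots,l\} = I_+ \sqcup I_-$, and factor $\ga = \si_+\si_-$ into the two commuting involutions $\si_\pm = \prod_{i\in I_\pm} r_{\al_i}$. From the Perron--Frobenius eigenvector of the Cartan matrix, with signs assigned according to the bipartition, one constructs the real $2$-plane $P \subset \h_\R$ on which $\ga$ acts as rotation through $2\pi/s$; its complexification contains $\C z$. The regularity of $z$ is then equivalent to the geometric statement that no root projects orthogonally to the origin of $P$, and I expect establishing this to be the genuinely hard step. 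I would derive it from the \emph{strict} positivity of the Perron--Frobenius coordinates, which rules out the cancellation needed for a root to lie in the orthogonal complement of $P$. Once regularity is secured, each $\ga$-orbit projects to the $s$ vertices of a regular $s$-gon in the Coxeter Plane, confirming that every orbit has size $s$; with $|\De| = sl$ this yields exactly $l$ orbits, and simultaneously exhibits the projection-to-$P$ picture that the rest of the paper exploits.
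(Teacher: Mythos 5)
The paper offers no proof of this theorem: it is quoted verbatim from Kostant \cite{Ko59}. So the relevant comparison is with Kostant's argument and with the proof that the paper's own Coxeter-Plane framework encodes. Your proposal is correct in outline, and it is the classical reflection-group proof of Steinberg (the one in Humphreys' \emph{Reflection Groups and Coxeter Groups}, \S3.17--3.19, and in Bourbaki, Ch.~V, \S6): orbit sizes divide $s$ because $\ga^s=1$; freeness is detected by evaluating roots on an eigenvector $z$ with $\ga z=\om z$, $\om=e^{2\pi\i/s}$; the regularity $\al(z)\ne0$ comes from the bipartite factorization and Perron--Frobenius positivity; and $\vert\De\vert=sl$ converts freeness into the orbit count.

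The proof implicit in the paper (and essentially Kostant's) differs only in where regularity comes from. There one works with the cyclic element $E_+=\sum_{i=0}^l\sqrt{q_i}\,e_{\al_i}$ and the apposition Cartan subalgebra $\h^\pr=\Ker\ad E_+$, and runs your eigenvector computation on the apposition side: for $\be\in\De^\pr$ one has $\be(E_+)\ne 0$, since $\be(E_+)=0$ would put the root space $\g_\be$ inside $\Ker\ad E_+=\h^\pr$; and since $\Ad(P_0)E_+=\om E_+$ while $\Ad(P_0)$ realizes the Coxeter element on $\h^\pr$, the identity $(\ga^k\cdot\be)(E_+)=\om^{-k}\be(E_+)$ forces $s\mid k$ whenever $\ga^k\cdot\be=\be$. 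So the regular eigenvector is produced by Lie algebra structure ($E_+$ is regular semisimple because its centralizer is a Cartan subalgebra) instead of by positivity of a Perron--Frobenius vector. This is exactly why Definition \ref{CP} is well posed --- the points $\be(E_+)$ are all nonzero, and each Coxeter orbit appears as a wheel of $s$ of them. Your route is more elementary (no Lie algebra needed, only the root system); the paper's route buys the apposition machinery that the rest of the paper uses anyway.

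Two caveats. First, $\si_+$ and $\si_-$ are \emph{not} commuting involutions: if they commuted, $\ga=\si_+\si_-$ would square to the identity, contradicting $\ga$ having order $s$. What you need (and what is true) is that the reflections \emph{within} each colour class commute, making each $\si_\pm$ separately an involution; this slip is not load-bearing. Second, your proof consumes $\vert\De\vert=sl$ as an input. That identity is standard but not shallower than the theorem itself: the usual proofs go through invariant theory (the sum of the exponents equals the number of positive roots, together with the duality $n_i+n_{l+1-i}=s$) or through the same Coxeter-plane geometry. You must take it from a source that establishes it independently of the orbit theorem, since some expositions obtain $\vert\De\vert=sl$ as a \emph{corollary} of Kostant's theorem, and citing such a source would make your argument circular.
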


The Coxeter Plane is a graphical depiction of a Lie algebra, alongside the (better known) depictions such as the Dynkin diagram and the Stiefel diagram, but it is
harder to define (and harder to find in the literature). 
It is attributed to Coxeter as a visualization of the polytope spanned by $\De$.  

The Coxeter Plane is the result of projecting $\De$  orthogonally onto a certain real plane in $\h^\ast$.
Thus it consists of a finite number of dots in the plane. Conventionally the rays through these dots (starting from the origin) are also drawn.  To define the plane precisely we need some more notation (for more details of what follows, we refer to Appendix B of \cite{GH2}). 

Let $\g=\h\oplus (\oplus_{\al\in\De} \g_\al)$ be the root space decomposition of $\g$ with respect to $\h$. Let $B$ be the Killing form (or a positive scalar multiple). Dual to each $\al\in\De$, we define $H_\al\in\g$ by $B(\ ,H_\al)=\al(\ )$. Then $H_{\al_1},\dots,H_{\al_l}$ is a basis of $\h$,
and it is possible to choose root vectors $e_\al\in\g_\al$ such that $B(e_\al,e_{-\al})=1$
and $[e_\al,e_{-\al}]=H_\al$. 

Let $\psi$ be the highest root. Then $\psi=\sum_{i=1}^l q_i\al_i$ for some positive integers $q_i$, and we have $s=\sum_{i=0}^l q_i$ (where $q_0=1$).  It will be convenient later to write $\al_0=-\psi$. 

Next we define the \ll real\rr subspace of $\h$ to be
\[
\h_\sharp= \{ X\in\h \st \al(X)\in\R \text{ for all } \al\in\De \} = \oplus_{i=1}^l \R H_{\al_i}.
\]
The restriction of $B$ to $\h_\sharp$ is positive definite.  The Coxeter Plane 
will be a certain two-dimensional real subspace of $\h_\sharp$ (or the dual $\h_\sharp^\ast$; we
shall use $\h_\sharp$ from now on), then the roots are projected orthogonally to this plane.  Orthogonal
projection is defined using $B$.

Traditionally, this subspace has been described in rather implicit ways. 
In \cite{Ko10}, Kostant gave the following description.  Let
\[
E_+=\sum_{i=0}^l \sqrt{q_i} e_{\al_i},\quad
E_-=\sum_{i=0}^l \sqrt{q_i} e_{-\al_i}.
\]
We have $[E_+,E_-]=0$.
Then
\[
\h^\pr = \Ker \ad E_+ = \Ker \ad E_-
\]
is another Cartan subalgebra. In Kostant's terminology (from his much earlier paper \cite{Ko59}), 
the Cartan subalgebras $\h,\h^\pr$ are said to be {\em in apposition}.  We denote the set of roots with respect to 
$\h^\pr$ by $\De^\pr$, the root space decomposition by
$\g=\h^\pr\oplus (\oplus_{\be\in\De^\pr} \g_\be)$,
and the real subspace of $\h^\pr$ by $\h^\pr_\sharp$.
(There is no natural identification $\h\cong\h^\pr$, so the roots $\al\in\De$
and the apposition roots $\be\in\De^\pr$ are, a priori, unrelated.)

Kostant's description of the Coxeter Plane in $\h^\pr_\sharp$ is this:  as the complex line $\C E_+$ in $\h^\pr=\h^\pr_\sharp\otimes\C$ is
isotropic with respect to $B$,  it corresponds to an oriented
real $2$-plane $Y$ in $\h_\sharp^\pr$. 

Although this can be used to give
an explicit description of $Y$,  a simpler description was given in Appendix B of our paper \cite{GH2}:

\begin{definition}\label{CP} The Coxeter Plane is the (finite) set in $\C$ consisting of the points
$\{\be(E_+) \st \be \in \De^\pr\}$, together with the rays from the origin which pass through these points.  Each point is labelled with the corresponding set of roots.
\end{definition}

\no Thus, in this description, the underlying $2$-plane is just $\C$.  
It should be noted that, while $E_+$ is (by definition) in $\h^\pr$, it is not
necessarily in $\h^\pr_\sharp$, so the points $\be(E_+)$ are indeed {\em complex} numbers.
It is known that there are precisely $2s$ rays, adjoining rays being separated by $\pi/s$.

The roots situated on any $s$ consecutive rays give a choice of positive roots in $\De^\pr$. The corresponding simple roots are situated on the two extremal rays.
These facts are illustrated in Figure \ref{cox4}, in the case $\g=\sl_4\C$, which we explain next.
\begin{figure}[h]
\begin{center}
\vspace{-2cm}
\includegraphics[scale=0.4, trim= 0 200 0 50]{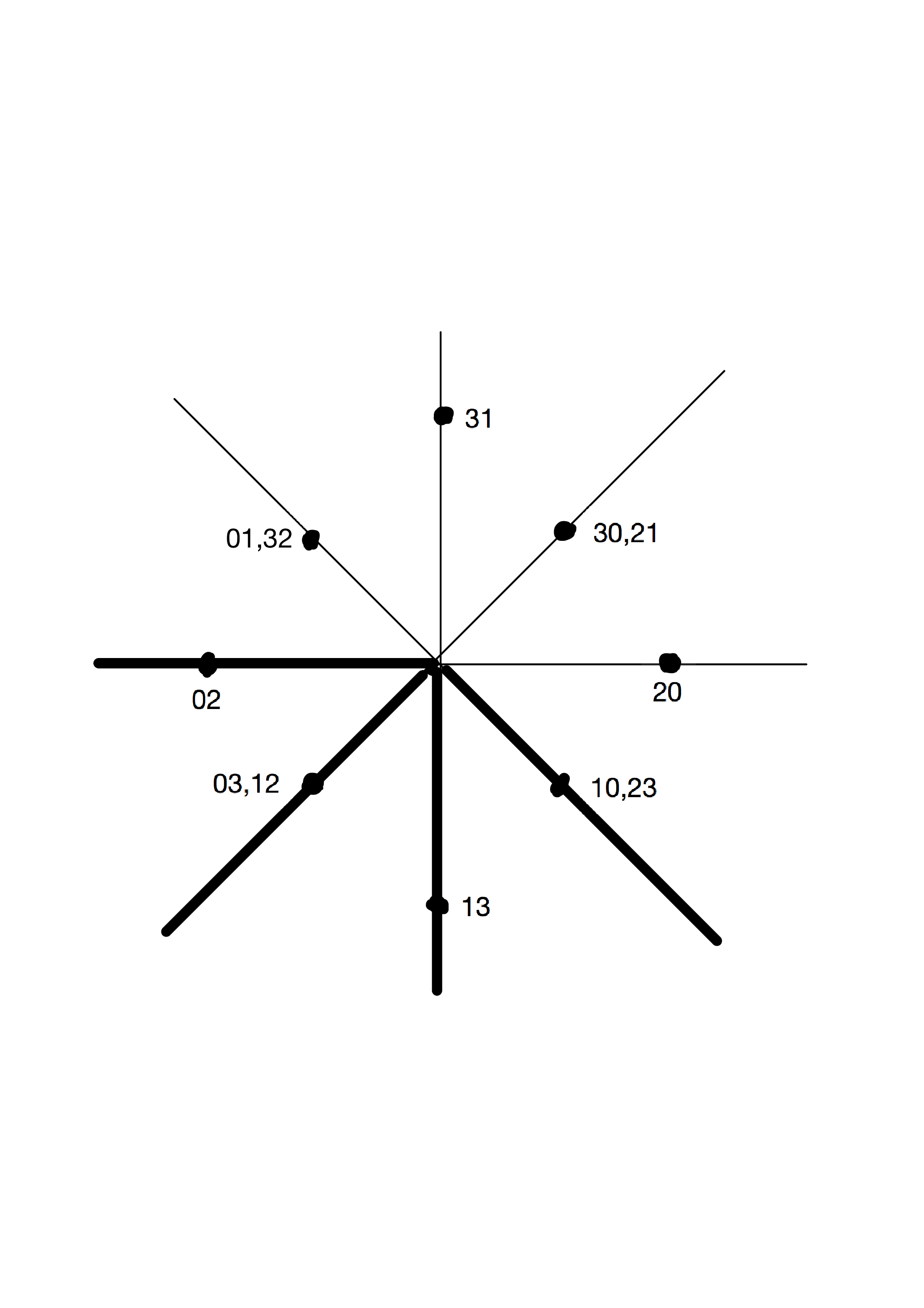}
\end{center}
\caption{Coxeter Plane for $\g=\sl_4\C$.}\label{cox4}
\end{figure}
\begin{example}\label{sl4c}
Let $\h=\{ \diag(x_0,\dots,x_n) \st \sum_{i=0}^n x_i=0 \}$ be
the standard Cartan subalgebra of $\sl_{n+1}\C$, 
and $B(X,Y)=\tr XY$.  
The roots are $x_i-x_j$, for $0\le i\ne j\le n$. 
We choose $\al_1=x_0-x_1,\dots,\al_n=x_{n-1}-x_n$ as
simple roots. Here $l=n$ and $s=n+1$, and all $q_i=1$.  We have
$\al_0=x_n-x_0$ .  
As root vector for the root $x_i-x_j$ we take
$e_{x_i-x_j}=E_{i,j}$, the matrix with $1$ in the $(i,j)$ entry and $0$ elsewhere.

The Coxeter Plane for $\sl_4\C$ 
is shown in Figure \ref{cox4}.  
The apposition Cartan subalgebra and roots, and
the computation of $\be(E_+)$, will be described in section \ref{2.4}.  For
the moment we just state that the apposition
roots $\be\in\De^\pr$ are denoted by symbols $ij$, for $0\le i\ne j\le n$.
In Figure \ref{cox4} we use the same notation for their projections to
the Coxeter Plane.  There are $12$ roots altogether, which project to $8$ points of the plane.

The action of the Coxeter element is given by the cyclic permutation
$(3210)$. This corresponds to rotation through $-\pi/2$.
In Figure \ref{cox4}, as an example, we have chosen $-\pi/4, -\pi/2, -3\pi/4, -\pi$ as the
positive rays (the heavy lines); the corresponding simple roots are
$10,23$ (on the $-\pi/4$ ray) together with $02$ (on the $-\pi$ ray). There are
$8$ such choices; note that this is less than the number of (arbitrary) choices of
positive roots, which is $4!=24$. 
\qed
\end{example}

In \cite{Ko59}, Kostant observed that the adjoint action on $\g$ of a certain element $P_0\in G$ 
preserves $\h^\pr$ and acts there as a Coxeter element. The definition of $P_0$
is:
\[
\textstyle
P_0=\exp(2\pi\i x_0/s),\quad x_0=\sum_{i=1}^l \eps_i
\]
where the $\eps_i\in\h$ are defined by $\al_i(\eps_j)=\de_{ij}$.
Kostant used this to investigate the relation between the Coxeter element and
the exponents $n_1,\dots,n_l$ of $\g$.   
He obtained the following description of the action of the Coxeter element (see section 6 of \cite{Ko85}):

\begin{theorem}   
The set $\{ \al\in\De_+ \st \ga^{-1}\cdot\al \in \De_-\}$ has $l$ elements,
and is a \ll fundamental domain\rr for the action of the Coxeter element $\ga$
on $\De$.  This set can be expressed as 
$
\Pi_2\cup \ga\cdot(-\Pi_1),
$
where  $\Pi=\Pi_1\cup \Pi_2$ is a \ll black and white\rr decomposition of the set of simple roots.
\end{theorem}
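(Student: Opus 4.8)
The plan is to work with the \emph{bipartite} representative of the Coxeter class, which is exactly the datum encoded in the black-and-white decomposition. Since the Dynkin diagram is a tree, hence bipartite, its nodes can be $2$-coloured, giving $\Pi=\Pi_1\cup\Pi_2$ with the simple roots in each $\Pi_i$ mutually orthogonal. Consequently the reflections in a common colour class commute, so $\si_1=\prod_{\al\in\Pi_1}r_\al$ and $\si_2=\prod_{\al\in\Pi_2}r_\al$ are well-defined involutions, and I take $\ga=\si_2\si_1$ as the Coxeter element (all Coxeter elements are conjugate, so the choice of representative is harmless). The two facts I shall use repeatedly are that $\si_i$ negates each root of $\Pi_i$ while sending the others to positive roots; in particular $\si_1\al=-\al$ for $\al\in\Pi_1$, and $\si_2\al$ is a positive root with coefficient $1$ on $\al$ when $\al\in\Pi_1$.

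First I would pin down the cardinality and the explicit form together. Writing $N(w)=\{\al\in\De_+\st w\al\in\De_-\}$, the set in the statement is $N(\ga^{-1})$, and the standard identity $|N(w)|=\ell(w)$ gives $|N(\ga^{-1})|=\ell(\ga)\le l$, the inequality because $\ga$ is visibly a product of $l$ reflections. Now set $F=\Pi_2\cup\ga(-\Pi_1)$. Using $\ga=\si_2\si_1$ and $\si_1\al=-\al$ for $\al\in\Pi_1$ one checks $\ga(-\al)=\si_2\al$, so $F=\Pi_2\cup\si_2\Pi_1$; these are $l$ \emph{distinct} positive roots, since each member of the second family has coefficient $1$ on some root of $\Pi_1$, which separates it both from $\Pi_2$ and from the other members. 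Then I verify $F\subseteq N(\ga^{-1})$ by the two short computations $\ga^{-1}\al=\si_1\si_2\al=-\si_1\al\in\De_-$ for $\al\in\Pi_2$ (as $\si_1\al\in\De_+$), and $\ga^{-1}(\si_2\al)=\si_1\al=-\al\in\De_-$ for $\al\in\Pi_1$. Since $|F|=l$ and $F\subseteq N(\ga^{-1})$ while $|N(\ga^{-1})|\le l$, all three coincide: $N(\ga^{-1})=F$ has exactly $l$ elements (and incidentally $\ell(\ga)=l$, so the bipartite word is reduced).

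For the fundamental-domain assertion I would use that $\ga$ is regular, i.e.\ has no nonzero fixed vector (equivalently, none of its eigenvalues $\exp(2\pi\i n_j/s)$, $1\le n_j\le s-1$, equals $1$); hence $\sum_{k=0}^{s-1}\ga^k=0$ on the reflection representation, and therefore every $\ga$-orbit $O$ in $\De$ sums to zero. In particular each orbit meets both $\De_+$ and $\De_-$. Writing an orbit cyclically as $\nu,\ga\nu,\dots,\ga^{s-1}\nu$, an element $\ga^k\nu$ lies in $N(\ga^{-1})$ precisely when $\ga^{k-1}\nu\in\De_-$ and $\ga^k\nu\in\De_+$, i.e.\ at a sign change from negative to positive around the cycle; since both signs occur there is at least one such change, so $|O\cap N(\ga^{-1})|\ge 1$. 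By Kostant's orbit theorem above there are exactly $l$ orbits, and we have just shown $\sum_O|O\cap N(\ga^{-1})|=|N(\ga^{-1})|=l$; comparing, each orbit contributes \emph{exactly one} element. This is precisely the statement that $N(\ga^{-1})$ is a transversal, hence a fundamental domain, for $\ga$ acting on $\De$.

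The computations in the middle step are routine, so the real content is organisational: keeping the conventions for $N(w)$, for $\ga$ versus $\ga^{-1}$, and for which colour plays the role of $\si_1$ mutually consistent, so that the explicit set emerges as $\Pi_2\cup\ga(-\Pi_1)$ rather than its mirror image. The one genuinely structural input is the vanishing of $\sum_{k=0}^{s-1}\ga^k$ (equivalently, regularity of the Coxeter element): this is what forces each orbit to change sign, and thereby upgrades the global count $|N(\ga^{-1})|=l$ into the \emph{local} statement of one representative per orbit. I expect this regularity/sign step to be the part most in need of care.
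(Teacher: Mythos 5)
Your proposal is correct, but it cannot be \ll essentially the same\rr as the paper's argument for the simple reason that the paper gives no proof: the theorem is quoted from section 6 of Kostant's paper \cite{Ko85}, and what the paper adds is only the Coxeter-Plane visualization (via Definition \ref{CP}), in which $\Pi_2$ and $\Pi_1$ sit on the first and last positive rays, the fundamental domain $\Pi_2\cup\ga\cdot(-\Pi_1)$ occupies the first two positive rays, and $\ga$ acts by rotation through $-2\pi/s$. Your argument is therefore a genuinely independent, self-contained route. Its two halves are: (i) the identification $N(\ga^{-1})=\Pi_2\cup\si_2\Pi_1$ for the bipartite representative $\ga=\si_2\si_1$, obtained by squeezing $F\subseteq N(\ga^{-1})$, $|F|=l$, against $|N(\ga^{-1})|=\ell(\ga)\le l$; and (ii) the conversion of \ll fundamental domain\rr into counting: orbit sums vanish because $\ga$ has no eigenvalue $1$, so each $\ga$-cycle changes sign at least once, and $l$ cycles sharing $l$ total sign changes forces exactly one per cycle, using the preceding orbit theorem of Kostant ($l$ orbits of size $s$). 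Your sign-change step is in fact the combinatorial shadow of Kostant's geometric picture: in the Coxeter Plane each orbit winds once around the origin and so crosses from the negative rays to the positive rays exactly once. What the paper's viewpoint buys is that geometric insight; what yours buys is a complete, checkable proof from standard Coxeter combinatorics.

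Two refinements. First, the parenthetical \ll the choice of representative is harmless\rr should be sharpened: the cardinality and fundamental-domain assertions do hold for every Coxeter element (any product of the $l$ distinct simple reflections has length $l$, and your orbit argument never uses bipartiteness), but the explicit formula $\Pi_2\cup\ga\cdot(-\Pi_1)$ is special to the bipartite representative. In type $A_3$ with $\ga=r_{\al_1}r_{\al_2}r_{\al_3}$ one finds $\ga\cdot(-\al_1)=-\al_2\in\De_-$ for the colouring $\Pi_1=\{\al_1,\al_3\}$, and $\ga\cdot(-\al_2)=-\al_3\in\De_-$ for the other colouring, so the formula fails for non-bipartite orderings; since the statement presupposes the black-and-white decomposition, you should simply declare at the outset that the theorem concerns the bipartite representative, as Kostant does. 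Second, the input you flag as most delicate --- regularity of $\ga$ --- admits a two-line elementary proof in the bipartite case, so no appeal to the exponents is needed: if $\si_2\si_1 v=v$ then $v-\si_1 v=v-\si_2 v\in\Span\Pi_1\cap\Span\Pi_2=\{0\}$, hence $\si_1 v=\si_2 v=v$, which forces $v$ to be orthogonal to every simple root and so $v=0$.
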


\no The meaning of \ll black and white\rr is that one simple root in the Dynkin diagram is declared black, the adjacent simple roots are declared white, and so on.  

Our version of the Coxeter Plane (Definition \ref{CP}) provides a geometrical visualization of this theorem:
$\Pi_2$ and $\Pi_1$ correspond (respectively)  to the roots on the first and last positive rays, and 
the fundamental domain
$\Pi_2\cup \ga\cdot(-\Pi_1)$ consists of the first and second positive rays;
the action of the Coxeter element $\ga$ is given by rotation through $-\pi/2$.
For example, in Figure \ref{cox4}, we have $\Pi_2=\{10,23\}$ (the first ray) and $\Pi_1=\{02\}$ (the last ray).
Then $\Pi_2\cup \ga\cdot(-\Pi_1)= \{ 10,23,13\}$. 

A more significant aspect of Definition \ref{CP} is its source: a certain ordinary differential equation, which will lead us to the relation with physics.

\subsection{The Stokes Phenomenon}\label{2.2}\ 

We introduce  a certain ordinary differential equation in the complex variable $\la$.  The o.d.e.\ is linear, but its coefficients have singularities at $\la=0$ and $\la=\infty$.  A concrete example will be presented shortly, but in general Lie-theoretic terms it can be written
\[
\hat\om = F^{-1} F_\la \, d\la,
\]
where $F=F(\la)$ is a $G$-valued function (to be found), and where $\hat\om=\hat\om(\la)$ is
the (meromorphic) $\g$-valued $1$-form
\begin{equation}\label{omegahat}
\hat\om = 
\left[
-\tfrac{1}{\la^2}
\tfrac{sz}{N} \, \eta
 + 
 \tfrac{1}{\la} m
\right]
d\la,
\end{equation}
in which the coefficients of $\la^{-2},\la^{-1}$ are defined as follows.

First, $\eta=\sum_{i=0}^l c_i z^{k_i} e_{-\al_i} \in\g$, where the $c_i>0$ and the $k_i\in\R$ are
constants, and $z\in\C^\ast$ is a parameter.  Then $N=s+\sum_{i=0}^l q_i  k_i$ where $s$ is
the Coxeter number and the $q_i$ are as defined earlier.  Finally $m\in\h_\sharp$ is defined by the
conditions $\al_i(m)= \tfrac{s}{N} (k_i+1) - 1$ for $1\le i\le l$.  

For fixed $c_i,k_i,z$, this $\hat\om$ is a Lie algebra-valued $1$-form on $\C^\ast$. 
We regard it as a connection form, i.e.\ we consider the connection $\nabla=d+\hat\om$ in the trivial bundle over $\C^\ast$ with fibre $\g$. It is meromorphic with respect to the coordinate $\la$ of $\C^\ast$, with poles of order $2,1$ at $\la=0,\infty$. 

The $c_i,k_i,z$ do not play any role at this point, so let us (temporarily) set
$c_i=1,k_i=0,z=1$ and consider the case $\g=\sl_4\C$ in order to explain in very concrete
terms the relation with the Coxeter Plane in Figure \ref{cox4}.
Thus we have 
\[
\hat\om =
\left[
-\tfrac{1}{\la^2}
\bp
0 & 0 & 0 & 1\\
1 & 0 & 0 & 0\\
0 & 1 & 0 & 0\\
0 & 0 & 1 & 0
\ep
+
\tfrac{1}{\la}
\bp
\!m_0\!\! & 0 & 0 & 0\\
0 & \!\!m_1\!\! & 0 & 0\\
0 & 0 & \!\!m_2\!\! & 0\\
0 & 0 & 0 & \!\!m_3\!
\ep
\right]
d\la.
\]
The values of the $m_i$ will not concern us for the moment; let us take them
to be arbitrary real numbers.

From the definition $\hat\om=F^{-1} F_\la d\la$, it follows that the transpose $F^T$ is a fundamental solution matrix for the o.d.e.\ system
\begin{equation}\label{4x4ode}
Y_\la=
\left[
-\tfrac{1}{\la^2}
\bp
0 & 1 & 0 & 0\\
0 & 0 & 1 & 0\\
0 & 0 & 0 & 1\\
1 & 0 & 0 & 0
\ep
+
\tfrac{1}{\la}
\bp
\!m_0\!\! & 0 & 0 & 0\\
0 & \!\!m_1\!\! & 0 & 0\\
0 & 0 & \!\!m_2\!\! & 0\\
0 & 0 & 0 & \!\!m_3\!
\ep
\right]
Y,
\end{equation}
where
$Y=(y_0,y_1,y_2,y_3)^T$.
By elementary o.d.e.\ theory, such an $F$ exists locally near any chosen $\la_0$, and is unique up to multiplication on the left
by an element of $\SL_4\C$.  The o.d.e.\ is equivalent to a
scalar o.d.e.\ of order $4$, of Bessel type.  

What can be said about the solution $F$? If $\la_0\ne0,\infty$ then $F$ has
a Taylor series expansion at $\la_0$, which is always convergent in a nonempty open neighbourhood of $\la_0$, and
the normalization $F(\la_0)=I$ gives a \ll canonical\rr solution around the point $\la_0$.

If $\la_0=\infty$ (the pole of order $1$) then the Frobenius Method produces a (locally) convergent series expansion,
possibly involving $\log\la$; such solutions are holomorphic on sectors of width $<2\pi$ at
$\la_0$. In this case various normalizations are possible, depending on the values of the $m_i$, but 
this is still in the realm of elementary o.d.e.\ theory.  

If $\la_0=0$ (the pole of order $2$) then a generalization of the Frobenius Method produces a series expansion,
but this series is almost always divergent; it is just a formal solution. Nevertheless,
it is a classical fact
that on open sectors of certain width ($5\pi/4$ in this\footnote{
These sectors are also required to begin at angles of the form $n\pi/4$, $n\in\Z$.
}
example)  there is a unique holomorphic
solution whose asymptotic expansion is the given formal solution.  These
sectors are called Stokes sectors, and the fact that the
solution depends on the sector 
is called the Stokes Phenomenon.  
These solutions are just as canonical as the
normalized solution at $\la_0=\infty$.

In all cases the local solutions are analytic on the sectors where they were originally specified, and can be continued to the universal covering space $\tilde \C^\ast \ (\cong \C)$  of $\C^\ast$.  In this sense, all solutions are \ll essentially the same\rrr, as they differ only by (multiplication by) constant matrices. The space of solutions on $\tilde \C^\ast$ to this (linear!) o.d.e.\ is a four-dimensional vector space. 

However the various \ll canonical\rr solutions that we have mentioned play quite different roles.  In particular the solutions at zero and infinity (by construction) indicate explicitly their asymptotic behaviour at those singular points. This information is not visible if the solution is obtained merely by analytic continuation of a Taylor series solution at some $\la_0\ne0,\infty$.
Thus, when analyzing an o.d.e., the fundamental problem (in general, nontrivial) is to determine the constant matrices which
\ll connect\rr the various canonical solutions at the poles.  

At $\la_0=0$ this means the determination of the \ll Stokes matrices\rr (or \ll Stokes factors\rrr) which relate
solutions on different Stokes sectors. We also have connection matrices which relate such solutions to the canonical solution at $\la_0=\infty$.  
These matrices are known collectively as the \ll monodromy data\rr of (\ref{4x4ode}). 
The benefit of the monodromy data (when properly formulated) is that it is intrinsic; it can be used to parametrize the \ll moduli space\rr of such equations, i.e.\ the space of equivalence classes of equations under gauge transformations.  

\subsection{The Coxeter Plane and the Stokes Phenomenon}\label{2.3}\ 

{\em The Coxeter Plane turns out to be a diagram of the Stokes sectors for our differential equation.}  We shall explain this first for equation (\ref{4x4ode}), then in the general case.  

The Stokes sectors for (\ref{4x4ode}) depend on the eigenvalues of 
\[
\bp
0 & 1 & 0 & 0\\
0 & 0 & 1 & 0\\
0 & 0 & 0 & 1\\
1 & 0 & 0 & 0
\ep
=E_+,
\]
which are the $4$-th roots of unity $1,\om,\om^2,\om^3$. According to the classical theory (\cite{FIKN06}; see section 4.2 of \cite{GIL1} for a similar example),
these sectors are of the form
\[
S_{\th^\pr,\th^\prr}=(\th^\pr -\tfrac\pi2, \th^\prr +\tfrac\pi2)
\]
where the angles $\th^\pr,\th^\prr$ are consecutive angles in the set
\[
\{ \arg \om^i - \om^j \st 0\le i\ne j \le 3 \}
=
\{ 0, \tfrac{\pi}4, \tfrac{\pi}2,\tfrac{3\pi}4,\pi,\tfrac{5\pi}4,\tfrac{3\pi}2,\tfrac{7\pi}4 \},
\]
i.e.\ the arguments of all differences of (distinct) eigenvalues.  
As consecutive angles are separated by $\pi/4$, this produces the Stokes sectors of width $5\pi/4$ referred to earlier.  The rays with these angles are the rays of the Coxeter Plane in Figure \ref{cox4}.

We shall see in a moment that the points in the Coxeter Plane giving rise to those rays --- the roots --- determine the {\em shape} of the
Stokes matrices.  On the other hand, the {\em values} of the Stokes matrices cannot be predicted from the Coxeter Plane. They depend on the remaining coefficients of the equation (the $m_i$ in this example).

Recall that we have a canonical solution $F^T$ of the o.d.e.\ on the sector $S_{\th^\pr,\th^\prr}$.  Let us call this $F^T_{\th^\pr,\th^\prr}$.  On the next sector $S_{\th^\prr,\th^\prrr}$ we have $F^T_{\th^\prr,\th^\prrr}$.  The Stokes factor $Q_{\th^\prr}$ is defined by
\[
F^T_{\th^\prr,\th^\prrr} = F^T_{\th^\pr,\th^\prr} \ Q_{\theta^{\pr\pr}}.
\]
As these (like any other) solutions extend analytically to the universal cover, we should regard the angles as being in $\R$, rather than in $[0,2\pi)$.  Thus we have an infinite sequence of Stokes sectors and Stokes factors (in fact any two consecutive Stokes factors determine the rest, in our situation, because of the symmetries of the equation).   The product of any four consecutive Stokes factors
$ Q_{\th_1}  Q_{\th_2}  Q_{\th_3}  Q_{\th_4}$
 is a Stokes matrix.

In the classical approach, the Stokes matrices can be made triangular. For Lie-theoretic o.d.e.\ (as in our situation), there is a root-theoretic description of the shape, 
first observed by Boalch \cite{Bo02}.  This avoids the arbitrary choice of diagonalization of $E_+$; we simply work with the Cartan subalgebra containing $E_+$ and the roots with respect to that.
The Stokes factor $Q_{\th^\pr}$ then lies in the subgroup determined by the roots on the ray $\th^\pr$; 
the Stokes matrices lie in the Borel subgroups determined by four consecutive rays.  

Let us return now to the Lie-theoretic connection form $\hat\om$ for the Lie algebra $\g$.  The Stokes sectors are determined in exactly the same way as in the case $\g=\sl_4\C$, using the arguments of the complex numbers $\be(E_+)$ where $\be\in\De^\pr$. This is where our Definition \ref{CP} comes from.

\begin{remark}\label{lead}  To be precise, it is necessary to use the Cartan subalgebra and roots corresponding to
the coefficient of $\la^{-2}$ in $\hat\om$, i.e.\ $-\frac{sz}N \eta$, or the \ll transpose version\rr defined by $-\frac{sz}N \eta^T= -\frac{sz}N \sum_{i=0}^l c_i z^{k_i} e_{\al_i}$.  However, as in section 6 of \cite{GH2}, this can be conjugated easily to $-E_-$, or in the transpose version to $-E_+$.  As any
of $\pm E_\pm$ give the same set of points, for simplicity we have used $E_+$ in Definition \ref{CP}.
\end{remark}

For $\hat\om$, the Stokes factors can be computed explicitly.  Summarizing:

\begin{theorem}\label{guitli}
At the singular point $\la_0=0$ of the connection form $\hat\om$ we have:

(i) The Stokes sectors are the sectors 
$S_{\th^\pr,\th^\prr}=(\th^\pr -\tfrac\pi2, \th^\prr +\tfrac\pi2)$
where the angles $\th^\pr,\th^\prr$ are consecutive angles in the set
$\{\arg \be(E_+) \st \be\in\De^\pr \}$.  Each sector has width $\tfrac{\pi}{2}+\tfrac{\pi}{s}$. 

(ii) The Stokes factor $Q_{\th^\prr}$ is of the form
$Q_{\th^\prr}=\exp\left(
\sum_{ \be } s_\be e_\be
\right)$
where the sum is over $\be\in\De^\pr$ satisfying $\arg \be(E_+) = \th^\prr$.

(iii) The Stokes coefficient $s_\be$ is given by an explicit polynomial
expression in $\om^{k_0},\dots,\om^{k_l}$ (where $\om=e^{ 2\pi\i/s }$).
\end{theorem}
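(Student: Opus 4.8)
The plan is to treat Theorem \ref{guitli} as a Lie-theoretic refinement of the classical theory of the Stokes phenomenon at an irregular singular point of Poincar\'e rank $1$ (the pole of order $2$ at $\la_0=0$), following the scheme used for the rank-$1$ example in section 4.2 of \cite{GIL1} and the root-theoretic formalism of Boalch \cite{Bo02}. The first step is to put $\hat\om$ into standard form near $\la_0=0$: by Remark \ref{lead} the coefficient of $\la^{-2}$ is conjugate, up to a positive scalar, to $-E_-$ (or to $-E_+$ in the transpose version), so after this conjugation the leading term lies in the apposition Cartan subalgebra $\hd$ and is semisimple with $\ad$-eigenvalues $\{\be(E_+)\st\be\in\Ded\}\cup\{0\}$. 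A generalized Frobenius method then produces a formal fundamental solution whose dominant factor is $\exp(c\,E_+/\la)\,\la^{L}$ for an explicit constant $c$ and a diagonal exponent $L$ built from $m$; this is the object whose sectorial realizations we must compare.

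For part (i), I would note that the growth of the formal solution is controlled by the off-diagonal exponentials $\exp\big(c\,\be(E_+)/\la\big)$, $\be\in\Ded$. Writing $\la=re^{\i\th}$, the modulus of such a term is governed by $\mathrm{Re}\big(\be(E_+)/\la\big)=\tfrac{|\be(E_+)|}{r}\cos(\arg\be(E_+)-\th)$, so the directions in which a pair of solutions exchanges dominance are exactly $\th=\arg\be(E_+)$. These are the Stokes rays, and the maximal sectors on which a single sheet of the formal solution determines a unique holomorphic solution are obtained by widening the gap between two consecutive such rays by $\tfrac\pi2$ at each end, giving the sectors $S_{\th^\pr,\th^\prr}=(\th^\pr-\tfrac\pi2,\th^\prr+\tfrac\pi2)$. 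The width then follows from the known fact (Kostant's description of the Coxeter Plane) that the $2s$ rays $\arg\be(E_+)$ are equally spaced by $\pi/s$.

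For part (ii), the key point is that crossing the Stokes ray $\th^\prr$ only ``turns on'' those exponentials $\exp(c\,\be(E_+)/\la)$ that become maximally dominant precisely there, i.e.\ those $\be\in\Ded$ with $\arg\be(E_+)=\th^\prr$. By the general sectorial theory the Stokes factor $Q_{\th^\prr}$ therefore lies in the unipotent subgroup $\exp\big(\bigoplus_{\arg\be(E_+)=\th^\prr}\g_\be\big)$. Since any two roots $\be,\be^\pr\in\Ded$ on the same ray satisfy $\arg(\be+\be^\pr)(E_+)=\th^\prr$, the span $\bigoplus_{\arg\be(E_+)=\th^\prr}\g_\be$ is a nilpotent subalgebra and $\exp$ is a bijection onto the corresponding group; hence $Q_{\th^\prr}=\exp\big(\sum_\be s_\be e_\be\big)$ with the sum over that ray, which is the assertion.

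Part (iii) is where the real work, and the main obstacle, lies: producing the explicit polynomials $s_\be$ in the $\om^{k_i}$. Here I would exploit the cyclic symmetry of $\hat\om$: the rotation $\la\mapsto\om\la$ (with $\om=e^{2\pi\i/s}$), combined with conjugation by Kostant's grading element, acts on the connection by a constant gauge transformation, hence rotates the Stokes sectors by one step and conjugates each Stokes factor to the next. This reduces the problem to computing a single Stokes coefficient, and the constraint relating the product of one full cycle of Stokes factors to the formal monodromy $\exp(2\pi\i L)$ fixes the remaining scalar freedom. The individual coefficient is then extracted by reducing the system to the associated scalar equation of Bessel type and solving its connection problem: the monodromy exponents of this equation are the $\al_i(m)=\tfrac{s}{N}(k_i+1)-1$, and the classical Gamma-function formulae for the Stokes multipliers of such equations collapse, after using $\om^s=1$, to polynomial expressions in the $\om^{k_i}$. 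Carrying out this last reduction explicitly --- identifying the scalar equation, tracking the normalizations through the conjugations of Remark \ref{lead}, and verifying the polynomial (rather than transcendental) form --- is the nontrivial computational heart of the proof, and is exactly the computation performed for special cases in \cite{GIL1}, \cite{GH2}.
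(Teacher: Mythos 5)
Your treatment of parts (i) and (ii) follows essentially the same route as the paper's source for this theorem (section 6 of \cite{GH2}): classical sectorial theory at a pole of Poincar\'e rank one, as in \cite{FIKN06} and section 4.2 of \cite{GIL1}, for the sectors, and Boalch's root-theoretic description \cite{Bo02} of the Stokes groups for the shape of $Q_{\th^\prr}$; your observation that roots on a common ray span a nilpotent subalgebra is exactly the right point. One slip: the rays where dominance is exchanged are $\arg\be(E_+)\pm\tfrac\pi2$, not $\arg\be(E_+)$ itself (the latter are the rays of extremal growth of $e^{c\,\be(E_+)/\la}$); your final sector description is nevertheless the correct one, of width $\pi+\tfrac\pi s$ (consistent with the $5\pi/4$ quoted for $s=4$ earlier in the paper).

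The genuine gap is in (iii), in two respects. First, your determining constraint is wrong: you propose to pin down the coefficients by relating the product of one full cycle of Stokes factors to the formal monodromy $\exp(2\pi\i L)$ with $L$ built from $m$. But the formal monodromy of $\hat\om$ at $\la=0$ is \emph{trivial} (the paper states this immediately after the theorem; concretely, the residue $m$ has zero component in $\hd=\Ker\ad E_+$, e.g.\ for $\sl_{n+1}\C$ one has $\tr(mE_+^k)=0$ for $1\le k\le n$, so no $\la^L$ factor survives), and the relation that actually carries information equates the product of all Stokes factors with the inverse of the monodromy around the \emph{other} pole, the Fuchsian point $\la=\infty$, which your argument never uses; without it there is no equation for the $s_\be$ at all. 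Second, your fallback --- reduction to a scalar Bessel-type equation and ``classical Gamma-function formulae'' for its Stokes multipliers --- is not available at the stated level of generality: the theorem is for an arbitrary simple $\g$, where there is no canonical scalar reduction and no classical connection formula, and even in type $A$ this computation is the substantive content, not a quotable fact. The paper's actual mechanism (sections 5.2 and 6 of \cite{GH2}) is Lie-theoretic: the cyclic symmetry (which you do identify) reduces everything to two consecutive Stokes factors; their product, twisted by Kostant's element $P_0=e^{2\pi\i x_0/s}$, is an $s$-th root of the monodromy coming from $\la=\infty$ and lies in a Steinberg cross-section; Steinberg's theorem then identifies the coefficients as the values $s_i=\chi_i(M)$ of the fundamental characters at $M=e^{2\pi\i(m+x_0)/s}$, which is precisely why they are polynomials in $\om^{k_0},\dots,\om^{k_l}$. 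That cross-section/character step is the missing idea in your proposal.
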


Proofs and further details can be found in section 6 of \cite{GH2}).

We emphasize that such a simple description of the Stokes Phenomenon is
possibly only because of the highly symmetrical nature of $\hat\om$.  For example, the
formal monodromy is trivial, and this implies that $Q_{\th^\prr}=Q_{2\pi+\th^\prr}$. It is
the symmetries which allow us to conclude that all of the $2s$ Stokes factors are
determined by any two consecutive ones. 
The symmetries imply that the $s_\be$ are all real, that  $s_\be = s_{-\be}$, and
that  $s_{\be_1}=  s_{\be_2}$ if $\be_1,\be_2$ belong to the same orbit of the Coxeter 
element. 

\begin{definition}\label{groups}
Let $[\be]$ denote the the projection to the Coxeter Plane of the
Coxeter orbit of $\be$.   As $s_{\be}$ depends only on
$[\be]$, we may write $s_\be=s_{[\be]}$.  After choosing an ordering 
of the $l$ orbits, we denote the corresponding $s_{[\be]}$ by $s_1,\dots,s_l$. 
\end{definition}

With this notation, the polynomials in (iii) of Theorem \ref{guitli} are given by
$s_i=\chi_i(M)$, where $\chi_1,\dots,\chi_l$ are the
characters of the basic irreducible representations of $G$, and
$M=e^{ 2\pi\i(m+x_0)/s  }$ (see sections 5.2 and 6 of \cite{GH2}). 

To summarize, we have seen in this section how our o.d.e\ exhibits a surprisingly concrete relation with the Lie theory of the Coxeter Plane.  The link with physics, to be described in sections \ref{3} and \ref{4}, is perhaps still more surprising.

\subsection{Appendix: apposition data for $\g=\sl_{n+1}\C$}\label{2.4}\ 

Although the \ll apposition Cartan subalgebra\rr $\h^\pr$ is convenient for theoretical purposes, it makes matrix calculations (with root vectors, for example) much more complicated.  This is not much of a disadvantage as one generally needs to know eigenvalues rather than eigenvectors.  However, to reassure the nervous reader, we give in this section a simple matrix description of the apposition roots and root vectors in the case $\g=\sl_{n+1}\C$.  This will also explain how our diagrams of Coxeter Planes were constructed.

Starting from the standard diagonal Cartan subalgebra $\h$ (as in Example \ref{sl4c}),
the apposition Cartan subalgebra is
\[
\h^\pr =\Span_\C\{ E_+,E_+^2,\dots,E_+^n\}.
\]
It is easy to diagonalize $E_+$ (and hence $\h^\pr$), because the Vandermonde matrix 
\[
\Om=
\bp
1 & 1 & \cdots & 1 & 1
\\
1 & \om & \cdots & \om^{n-1} & \om^n
\\
\vdots & \vdots &   & \vdots & \vdots 
\\
1 & \om^{n} & \cdots & \om^{(n-1)n} & \om^{n^2}
\ep,
\quad
\om=e^{2\pi\i/(n+1)  }
\]
has the property $\Om^{-1} E_+ \Om = d_{n+1}=\diag(1,\om,\dots,\om^n)$. 
(We have $E_+ \Om = \Om d_{n+1}$, because $E_+$ has eigenvalues $1,\om,\dots,\om^n$,
and the columns of $\Om$ serve as eigenvectors.)

Thus we have the \ll diagonalization map\rr
\[
\Ad \Om^{-1}: \h^\pr \to \h.
\]
Using this identification, the root $x_i-x_j$ with respect to $\h$
corresponds to a root with respect to $\h^\pr$ which we denote by $ij$. By
definition we have
\[
ij(-E_+)= (x_i-x_j)(-d_{n+1}) = \om^j-\om^i.
\]
This is how the Coxeter Plane in Example \ref{sl4c} was obtained.  
For consistency with \cite{GH2}
we are using $ij(-E_+)$ here, rather than 
$ij(E_+)$, as permitted by Remark \ref{lead}.

With respect to the basis $\{ E_+,E_+^2,\dots,E_+^n\}$ of $\h^\pr$, the root $ij$ 
is given by
\[
\textstyle
ij \left(
\sum_{k=1}^n a_k E_+^k
\right)
=
\sum_{k=1}^n a_k (\om^{ki}-\om^{kj}),
\]
as $ij(E_+)=(x_i-x_j)(d_{n+1})$.
It follows that 
\[
\textstyle
\h^\pr_\sharp=
\left\{
\sum_{k=1}^n a_k E_+^k \st a_k=\bar a_{n+1-k}
\right\}.
\]
The \ll classical\rr description of the Coxeter Plane can be computed from this: it is
the real $2$-plane in $\h^\pr_\sharp$ corresponding to the isotropic complex line
$\C E_+$ in $\h^\pr$, namely
\[
\Span_\R\{ E_+ + E_-, \ii(E_+ - E_-) \}
\]
(note that $E_-=E_+^{-1}=E_+^n$). The complexification of this $2$-plane is 
$\Span_\C\{ E_+,E_- \}$.  These are eigenvectors of the Coxeter element $\Ad P_0$ 
with eigenvalues $\om,\om^{-1}$,  as
$P_0=\diag( \om^{n/2},\om^{(n-2)/2},\dots,\om^{-n/2})$.

The \ll $i$-th Coxeter Plane\rrr, which will play a role in the next section, can be described
in the same way as $\Span_\R\{ E^i_+ + E^i_-, \ii(E^i_+ - E^i_-) \}$.

We can go further and compute the root space decomposition
$\g=\h^\pr\oplus (\oplus_{\be\in\De^\pr} \g_\be)$.  By definition,
$\Om E_{i,j} \Om^{-1}$ is a root vector corresponding to $ij$. The Stokes factors $Q_\th$ of Theorem \ref{guitli} are expressed in terms of such root vectors. As matrices they are therefore rather unwieldy, 
but we note that $\Om E_{i,j} \Om^{-1}$ has the following simpler, but rather curious, expression:

\begin{proposition}
$\Om E_{i,j} \Om^{-1} = \tfrac1{n+1} \, d_{n+1}^i \ (\sum_{0\le i,j\le n} E_{i,j})\  d_{n+1}^{-j}$.
\end{proposition}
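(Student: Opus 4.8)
The plan is to collapse the identity to a one-line computation by writing the matrix unit as a rank-one outer product and conjugating each factor of $\Om E_{i,j}\Om^{-1}$ separately. Throughout I index rows and columns from $0$ to $n$, in keeping with Example \ref{sl4c}, so that $\Om_{ab}=\om^{ab}$ and the $(a,a)$-entry of $d_{n+1}=\diag(1,\om,\dots,\om^n)$ is $\om^{a}$. To avoid the clash of indices in the stated formula, I relabel the summation and write $J=\sum_{0\le p,q\le n}E_{p,q}$ for the all-ones matrix, so that $\sum_{0\le i,j\le n}E_{i,j}=J$.

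First I would pin down $\Om^{-1}$. Since $\Om$ is symmetric, the character-orthogonality relation $\sum_{c=0}^{n}\om^{(a-b)c}=(n+1)\,\de_{ab}$ (a finite geometric sum, valid because $\om^{n+1}=1$) gives $\Om\,\bar\Om=(n+1)I$, whence $\Om^{-1}=\tfrac1{n+1}\bar\Om$ and $(\Om^{-1})_{ab}=\tfrac1{n+1}\om^{-ab}$. Next, writing $E_{i,j}=e_i e_j^{T}$ for the standard coordinate vectors $e_0,\dots,e_n$ of $\C^{n+1}$ (not to be confused with the root vectors $e_\al$), I would push $\Om$ and $\Om^{-1}$ onto the two factors:
\[
\Om E_{i,j}\Om^{-1}=(\Om e_i)\,(e_j^{T}\Om^{-1})=\tfrac1{n+1}\,(\Om e_i)(\bar\Om e_j)^{T}.
\]
Here $\Om e_i$ is the $i$-th column of $\Om$ --- the eigenvector of $E_+$ for the eigenvalue $\om^{i}$ --- whose $a$-th entry is $\om^{ai}$; comparing entries gives $\Om e_i=d_{n+1}^{i}v$ and $\bar\Om e_j=d_{n+1}^{-j}v$, where $v=(1,\dots,1)^{T}=\Om e_0$ is the all-ones vector. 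Since $vv^{T}=J$, substitution yields
\[
\Om E_{i,j}\Om^{-1}=\tfrac1{n+1}\,d_{n+1}^{i}\,(vv^{T})\,d_{n+1}^{-j}=\tfrac1{n+1}\,d_{n+1}^{i}\,J\,d_{n+1}^{-j},
\]
which is exactly the asserted identity. As an independent check one may compare $(a,b)$-entries directly: both sides equal $\tfrac1{n+1}\om^{ai-bj}$.

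The argument is entirely routine, so I expect no genuine obstacle; the only points demanding care are bookkeeping. One must respect the $0$-based indexing (so that $d_{n+1}^{i}$ contributes the factor $\om^{ai}$ and not $\om^{(a+1)i}$), carry the normalization $\tfrac1{n+1}$ coming from $\Om^{-1}$ correctly, and keep the fixed indices $i,j$ distinct from the summation indices of $J$. Conceptually the statement is nothing more than the observation that conjugation by the discrete Fourier matrix $\Om$ sends the matrix unit $E_{i,j}$ to the rank-one ``character'' matrix $\tfrac1{n+1}(\Om e_i)(\bar\Om e_j)^{T}$, and the twisting by the diagonal factors $d_{n+1}^{i},d_{n+1}^{-j}$ simply records that the $i$-th column of $\Om$ is $d_{n+1}^{i}$ applied to the all-ones eigenvector of $E_+$.
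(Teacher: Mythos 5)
Your proof is correct, and it takes a genuinely different (though closely related) route from the paper's. You compute the inverse explicitly, $\Om^{-1}=\tfrac1{n+1}\bar\Om$, via character orthogonality, factor the matrix unit as the rank-one product $E_{i,j}=e_ie_j^T$, and recognize the columns $\Om e_i$ and $\bar\Om e_j$ as the diagonal twists $d_{n+1}^i v$ and $d_{n+1}^{-j}v$ of the all-ones vector $v$, so the identity collapses to $vv^T=\{1\}$ (your $J$); the entrywise check that both sides equal $\tfrac1{n+1}\om^{ai-bj}$ settles it beyond doubt. The paper never computes $\Om^{-1}$ at all: it writes $E_{i,j}=E_-^i\,\diag(1,0,\dots,0)\,E_+^j$ using powers of the cyclic shift matrices, verifies the single identity $(n+1)\,\Om\,\diag(1,0,\dots,0)=\{1\}\Om$, and then propagates it to general $(i,j)$ through the conjugation relations $\Om E_-\Om^{-1}=d_{n+1}$ and $\Om E_+\Om^{-1}=d_{n+1}^{-1}$, staying entirely within the diagonalization framework of the appendix. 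The two arguments are cousins: the paper's key identity is exactly the $(i,j)=(0,0)$ case of your outer-product formula, with the shifts $E_\pm$ doing the work of moving to general indices. Yours is the more self-contained and direct (and the entrywise verification is the sharpest possible check); the paper's buys a cleaner fit with the Lie-theoretic setup, keeping the cyclic matrices $E_\pm$ --- the structurally meaningful objects --- in view and avoiding the introduction of $\bar\Om$.
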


\begin{proof}
Let us denote the matrix $\sum_{0\le i,j\le n} E_{i,j}$ by $\{1\}$ (all entries of this matrix are $1$).
It is easy to verify that
\[
(n+1) \Om
\bsp
1 & & & \\
 & \ 0\  & & \\
  & & \ddots & \\
   & & & \ 0
\esp
=\{1\}\Om
\]
and that
\[
E_{i,j} = E_-^i
\bsp
1 & & & \\
 & \ 0\  & & \\
  & & \ddots & \\
   & & & \ 0
\esp
E_+^j,
\quad
\textstyle
E_-=E_+^T=  E_{0,n} + \sum_{i=1}^n E_{i,i-1}.
\]
Hence $(n+1)E_{i,j} = 
E_-^i \Om^{-1} \{1\} \Om  E_+^j = \Om^{-1} (\Om E_-^i \Om^{-1}) \{1\} (\Om  E_+^j \Om^{-1}) \Om$.
This is 
$\Om^{-1} d_{n+1}^i \{1\} d_{n+1}^{-j} \Om$
as $\Om E_- \Om^{-1} = d_{n+1}$, $\Om  E_+ \Om^{-1}= d_{n+1}^{-1}$.
We obtain the stated formula.
\end{proof}

\section{Particles and polytopes}\label{3}

The Coxeter Plane first appeared in Toda field theory for a complex semisimple Lie algebra $\g$, in the work of Freeman \cite{Fr91},
Braden et al.\  \cite{BCDS90} and Dorey \cite{Do91},\cite{Do92}.  As we have indicated in the introduction, \ll particles\rr were proposed to correspond to orbits of roots under the action of the Coxeter element $\ga$. Thus, in this particular theory, there are $l$ ($=\rank G$) particles, one for each orbit. The relevant polytope is the polytope in $\h^\ast$ spanned by the roots $\al\in\De$ (equivalently, the polytope in $\h$ spanned by the $H_\al$). 

There are conserved quantities, called $n_i$-spin, where the numbers
$1= n_1\le \cdots \le n_l=s-1$ are the exponents of $\g$.  These arise because it is
known that the eigenvalues of $\ga$ on $\h$ are $e^{2\pi\i n_1/s},\dots,e^{2\pi\i n_l/s}$.
The $n_i$-spin of the orbit of $\be$
is the length of its projection on the \ll $i$-th Coxeter Plane\rrr, where the latter is
defined to be the real $2$-plane whose complexification is spanned by the eigenvectors for
$e^{\pm 2\pi\i n_i/s}$.  (We have $n_i + n_{l+1-i}=s$.)

The case $i=1$ is (another description of) the usual Coxeter Plane, as the 
eigenvectors in question are just $E_\pm$.  
The conserved quantity in this case is called mass.  

As we are mainly interested in the usual Coxeter Plane and the masses of the particles, we
shall take  
\begin{align*}
\text{particle} \  &\longleftrightarrow\   [\be] \ (=\text{ orbit of $\be(E_+)$})
\\
\text{mass of particle} \  &\longleftrightarrow\   \vert \be(E_+) \vert \ (=\text{ distance of $\be(E_+)$ from origin})
\end{align*}
as the basic correspondence. This is an oversimplification, because two distinct root orbits may coincide after
projection to the Coxeter Plane. Furthermore,  it is possible to have two such projections with
the same mass.  These \ll degeneracies\rr may be undesirable from the physical viewpoint. However,
they cause no difficulties for the differential equation interpretation.
\begin{figure}[h]
\begin{center}
\vspace{-2cm}
\includegraphics[scale=0.4, trim= 0 200 0 50]{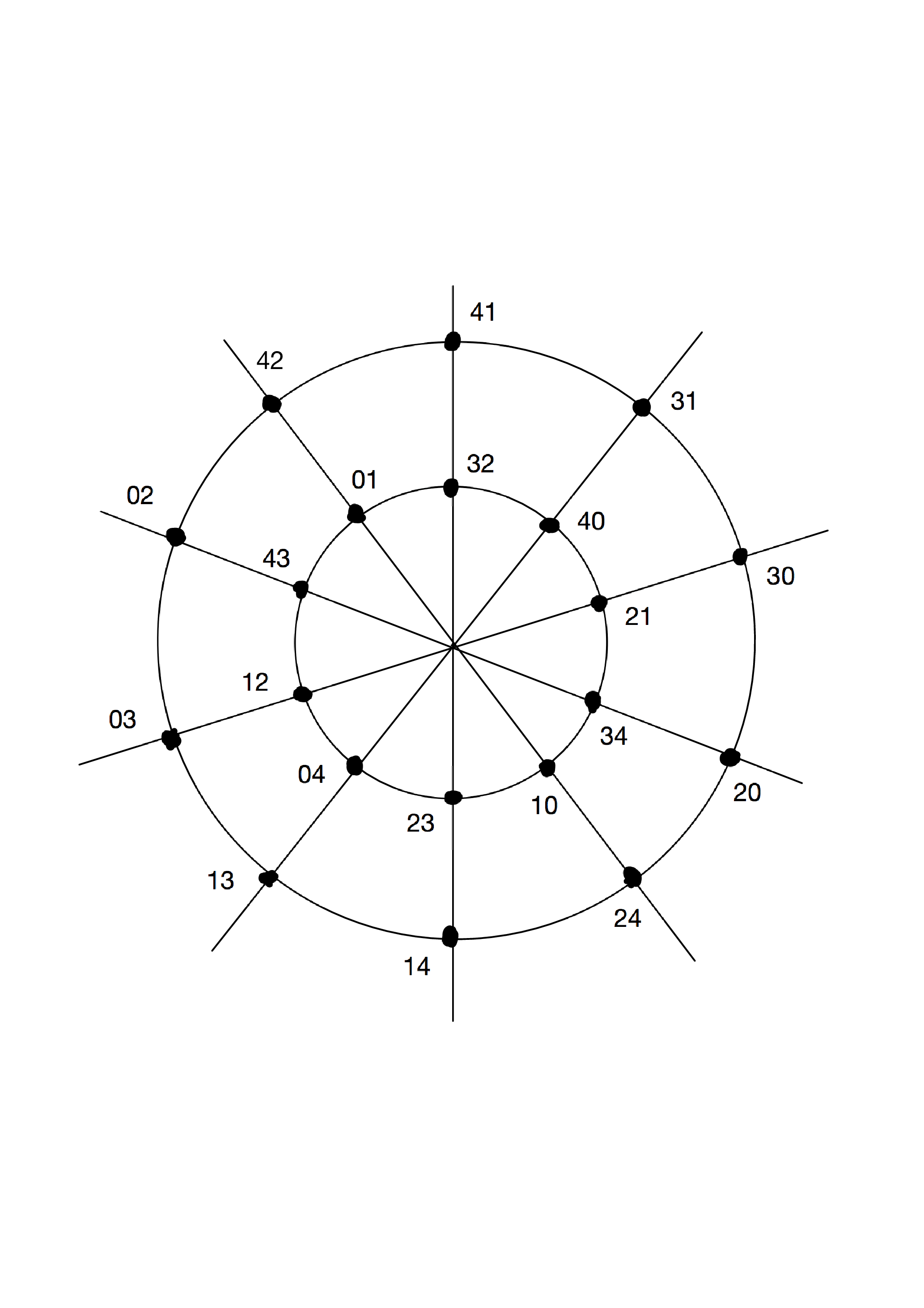}
\end{center}
\caption{Coxeter Plane for $\g=\sl_5\C$.}\label{cox5}
\end{figure}

\begin{example}\label{sl5c} We continue to use the notation for $\sl_{n+1}\C$ which was
introduced in Example \ref{sl4c}. 
The Coxeter Plane for $\sl_5\C$ is shown in Figure \ref{cox5}. 
The notation $ij$ is explained in section \ref{2.4}.  We emphasize that
$ij$ in our Coxeter Plane diagrams indicates the point $ij(-E_+)$.
The action of the Coxeter element on roots $ij$ is given by the permutation $(43210)$, which corresponds to rotation through $-2\pi/5$. 

By section \ref{2.4}, $ij(-E_+)$ is
$(x_i-x_j)(-\diag(1,\om,\om^2,\om^3,\om^4))$, where $\om=e^{2\pi\i/5}$. 
The $20$ roots are distributed on $4$ wheels, with $5$ on each wheel. The
radii of the wheels are
$2\sin\frac\pi5,2\sin\frac{2\pi}5$. Thus, although the roots are represented faithfully 
by the points in the Coxeter Plane, there
is a \ll mass degeneracy\rrr: the particle $[\be]$ and its \ll anti-particle\rr $[-\be]$ lie
on different wheels but with the same radius.
In contrast, in Example \ref{sl4c}, we had $[\be]=[-\be]$, but the \ll particle degeneracy\rr (arising on projecting to the Coxeter Plane) resulted again in only $2$ distinct masses.
\qed
\end{example}

A variant of this physical proposal (apparently with similar motivation) was studied by
Fendley, Lerche, Mathur, and Warner 
(\cite{FLMW91},\cite{LeWa91}).  In their \ll polytopic models\rrr, a finite-dimensional
representation $\th$ of the Lie algebra $\g$ on a vector space $V$ is chosen, and the 
relevant polytope is the polytope in $\h^\ast$ spanned by the weights of the representation.
The weight vectors (in $V$) are taken to be the vacua of the theory. Particles arise
as \ll solitons\rr tunnelling between vacua:  a soliton connects two vacua $v_i,v_j$ if
and only if the corresponding weights $\la_i,\la_j$
differ by a single root, i.e.\ $\la_i-\la_j\in\De$.
The physical characteristics of this particle are those assigned to it in the preceding discussion.

\begin{example}\label{sl2c}
For $\g=\sl_2\C$ the \ll Coxeter Plane\rr is just $\h_\sharp\cong\R$ itself,
with the points $\pm(x_0-x_1)(-\diag(1,-1))=\pm 2$ marked.  This represents one particle,
with mass $2$.  The weights of the irreducible representation of dimension $k+1$
are $kx_0, (k-1)x_0+x_1,\dots,kx_1$ and their projections to the Coxeter Plane are $-k,-k+2,\dots,k$.  Two weight vectors are connected by a soliton --- which can only be the aforementioned particle --- if and only if their (projected) weights differ by $\pm 2$, i.e.\ are adjacent.  
\qed
\end{example}

In view of the o.d.e.\ interpretation of the Coxeter Plane (which did not arise in the above articles
\cite{Fr91},\cite{BCDS90},\cite{Do91},\cite{Do92}),  it becomes plausible that the 
Stokes data (and the solutions themselves) may have some physical meaning.  This indeed turns out to be the case, as we shall see in the next section.

\section{Supersymmetry and integrable systems}\label{4}

\subsection{The tt* equations}\label{4.1}\ 

We turn now to the physics of supersymmetric field theories, at least, the small part of the theory referred to in the introduction. We have observed that the o.d.e.\ of section \ref{2}
fits well with the Lie-theoretic particle interpretation of section \ref{3}.  We emphasize that this was merely an observation; we shall now suggest an explanation for it.

The meromorphic connection form $\hat\om$ of section \ref{2}
does, not, by itself, shed any light on the physical proposals of  section \ref{3}, despite its promising mathematical structure.  To explain
those proposals another meromorphic connection form $\hat\al$ is needed, and here
$\hat\om$ will play an intermediary role.

The connection form $\hat\al$ appears in connection with the tt* equations 
(topological-antitopological fusion equations) of Cecotti and Vafa \cite{CeVa91}.
It is a smooth $t$-family of connection forms on $\C^\ast$, where $t$ is a complex variable, which
represents a  deformation of a conformal field theory. Like $\hat\om$, $\hat\al$ is a meromorphic connection form with poles at $\la=0$ and $\la=\infty$, but for $\hat\al$ both poles have order $2$.
This is the simplest version; more generally 
there are several complex variables $t_1,\dots,t_k$ in an appropriate moduli space of theories.

The space of vacua of the theory is a finite-dimensional complex vector space with a \ll topological metric\rr (independent of $t$) and an action of \ll chiral operators\rr (which depend holomorphically on $t$). 
Based on general principles, Cecotti and Vafa proposed the tt* equations as the equations for a massive deformation of the (massless) conformal field theory. 

The mathematical object corresponding to a solution  of these equations is a (smooth) family of Hermitian metrics which depend on $\vert t\vert$; it is a
Hermitian metric on the trivial vector bundle over $\C^\ast$ with fibre $V$. 

A brief introduction to these equations can be found in section 4.2 of the companion article \cite{Gu21}.  
For the present discussion, the important point is that these equations are
the condition for the connection form $\hat\al$ to be isomonodromic, that is, its monodromy data is independent of $t$ (even though $\hat\al$ itself, whose coefficients incorporate the Hermitian metric, depends on $t$ through those coefficients).  This isomonodromic formulation 
of the tt* equations was made by Dubrovin --- see equation (2.18) in \cite{Du93}.

\subsection{The tt*-Toda equations}\label{4.2}\ 

The comments above apply to the tt* equations in general; in the case
we want to consider (related to $\hat\om$ and the Coxeter Plane) the
connection form $\hat\al$ is
\begin{equation}\label{alphahat}
\hat\al=
\left[
-\tfrac t{\lambda^2} \Ad(e^w) E_-
- \tfrac1\la 
%x w_{ x } 
(tw_t +\bar t w_{\bar t})
+ \bar t \Ad(e^{-w}) E_+
\right]
d\la
\end{equation}
where the (unknown) function $w:\C^\ast\to\h_\sharp$
represents the tt* metric.  

Evidently $\hat\al$ has poles of order two at $\la=0$ and at $\la=\infty$. It corresponds to a linear o.d.e.\ in $\la$ with such singularities.  The monodromy is similar, but more complicated, than for the o.d.e.\ associated to $\hat\om$, because now we will have
Stokes matrices at both $\la=0$ and $\la=\infty$.
The condition for $\hat\al$ to be isomonodromic is nothing but the Toda equation 
\begin{equation}\label{tt*-Toda}
2w_{t \bar t} = - \sum_{i=0}^l \, q_i \, e^{-2\al_i(w)} H_{\al_i}
\end{equation}
for $w$. More precisely, it is the 
two-dimensional affine Toda equations for the Lie algebra $\g$, but with extra symmetries imposed for physical reasons (see \cite{GH2},\cite{Gu21}).   One of these
is the requirement that $w$ is \ll radial\rrr, i.e.\ $w=w(\vert t\vert)$. This makes the p.d.e.\ an o.d.e.,
but a rather nontrivial one --- it is a system of nonlinear equations of
Painlev\'e type.
We refer to (\ref{tt*-Toda}) with these extra symmetries as the tt*-Toda equations.

The (physical) relation between $\hat\al$ and $\hat\om$ is that 
$\hat\om$ represents the conformal field theory
being deformed.  The coefficient matrix $\eta$ is the matrix of multiplication by a generator of the
chiral ring of that theory. Roughly speaking, $\hat\al$ is obtained by combining two
copies of $\hat\om$, more precisely $\hat\om$ and its complex conjugate. As
$\hat\om$ is \ll topological\rrr, and its complex conjugate is 
\ll anti-topological\rrr, this gives rise to the terminology tt* fusion.  The Hermitian
metric is involved in the fusion process. 

Mathematically the two connection
forms are related by a factorization procedure - the Iwasawa factorization (but in the loop group of $G$, 
i.e.\ an affine Kac-Moody group, rather than in $G$ itself).  
This is described in detail in \cite{GIL3},\cite{GH2}. 
As a consequence, 
$\hat\om$ is an approximation to $\hat\al$ at $\la=0$ -- in particular {\em they have the same Stokes data there.}
The parameter $t$ of $\hat\al$ is
related to the parameter $z$ of $\hat\om$ by 
\begin{equation}\label{tandz}
t=\tfrac sN c^{1/s} z^{N/s}
\end{equation}
where $c$ is an expression in the
coefficients $c_0,\dots,c_l$.  

\subsection{Solutions of the tt*-Toda equations}\label{4.3}\ 

Now it is time to discuss {\em solutions} of the tt* equations.  This is a nontrivial matter as
solutions of Painlev\'e equations tend to have many singularities, but we will need solutions
which are smooth for $0<\vert t\vert<\infty$. 

Physically, a solution is a massive deformation of a conformal field theory, and the very existence of such a deformation says something about that theory.
Cecotti and Vafa made a series of conjectures about the solutions and their expected physical properties
 (see \cite{CeVa91}, section 8, \ll The magic of the solutions\rrr). In particular:

\no(I) there should exist (globally smooth) solutions $w=w(\vert t\vert)$ on $\C^\ast$,
i.e. smooth for $0<\vert t\vert < \infty$;

\no(II) these solutions should be characterized by asymptotic data at $t=0$
(the \ll ultra-violet point\rrr; here the data is the chiral charges, essentially the $k_i$ in our situation);

\no(III)  these solutions should equally be characterized by asymptotic data at $t=\infty$,
(the \ll infra-red point\rrr; here the data is the soliton multiplicities,  the $s_i$ in our situation).

From now on we focus on the case $\g=\sl_{n+1}\C$, where results are available from
\cite{GH1},\cite{GH2},\cite{GIL1},\cite{GIL2},\cite{GIL3},\cite{GuLi14},\cite{MoXX},\cite{Mo14}.
We expect that similar results will hold for any $\g$. 

Using the notation of Example \ref{sl4c}, we can write $w=\diag(w_0,\dots,w_n)$ where
$w_0,\dots,w_n$ are real functions of $\vert t\vert$.  
The results can be summarized as follows (see section 3 of \cite{Gu21} for a more detailed summary):

\no(I) the global solutions $w$ are in one-to-one correspondence with points $(k_0,\dots,k_n)$ with
all $k_i\ge-1$ (where $N=n+1+ \sum_{i=0}^n k_i$ is now fixed, with $N>0$, e.g.\ $N=1$).  

\no(II) the solution corresponding to $(k_0,\dots,k_n)$
is characterized by its asymptotic behaviour $w_i\sim -m_i \log x$ 
as $t\to 0$ (recall that $m_{i-1}-m_i= \frac{n+1}N(k_i+1) - 1)$).

\no(III) 
the solution corresponding to $(k_0,\dots,k_n)$
is characterized by the following asymptotic behaviour of 
$w_0,\dots,w_n$
as $t\to \infty$:
\begin{equation*}
-\tfrac 4{n+1}
\sum_{p=0}^{[\frac12(n-1)]} w_p \sin \tfrac{(2p+1)k\pi}{n+1}  
\sim
 s_k\, F(L_k x),
\quad
k=1,\dots,[\tfrac12(n+1)]
\end{equation*}
where
$
F(x)=\tfrac12(\pi x)^{-\frac12}e^{-2x}$ and 
$L_k=2\sin \tfrac k{n+1}\pi$. 
(This determines the asymptotics of each $w_i$.)
Here, $[\tfrac12(n+1)]$ means $\tfrac12(n+1)$ if $n$ is odd, and
$\tfrac12n$ if $n$ is even. 
The real numbers $s_1,\dots,s_n$ (with $s_i=s_{n+1-i}$) are the
Stokes numbers of Definition \ref{groups}.  Explicitly,  
\begin{equation}\label{esf}
s_i=\si_i\left(
e^{ (2m_0+n)\frac{\pi\i}{n+1}  },
e^{ (2m_1+n-2)\frac{\pi\i}{n+1}  },
\dots,
e^{ (2m_n-n)\frac{\pi\i}{n+1}  }
\right)
\end{equation}
where $\si_i$ denotes the $i$-th elementary symmetric function of $n+1$ variables.

Thus the global solutions implement the one-to-one correspondence (predicted by Cecotti and Vafa) between

($0$) the data $k_0,\dots,k_n$ (with $\sum_{i=0}^n k_i$ fixed) at $t=0$, and

($\infty$) the data $s_1,\dots,s_n$ at $t=\infty$.

\no We have already noted that ($0$) represents the chiral ring in the UV limit; this
is the information contained in the connection form $\hat\om$.  (The $c_i$ in $\hat\om$ are
uniquely determined by the $k_i$, when we have global solutions.)  

What is the meaning of the data ($\infty$)? The key to this is the expression on the left hand side of the asymptotic formula (III), which arises as follows.  

First, although it is conventional to write
$w=\diag(w_0,\dots,w_n)$ in the usual matrix representation of equation (\ref{tt*-Toda}), $w$ can be expressed more intrinsically as a linear combination of the elements
\[
\textstyle
\{
\sum_{ij\in[\be]} \ e_{ij} \ \in \ \h_\sharp
\st \be\in\De^\pr
\}
\]
where $ e_{ij}$ is a (suitably normalized) root vector for the root $ij$
(cf.\ section 4.3 of \cite{Gu21}). These elements correspond to the particles $[\be]$. 
The coefficient of the $k$-th element in this \ll Fourier expansion\rr of $w$ is then
the left hand side of (III).

Thus the $s_k,L_k$ on the right hand side of (III) are naturally associated to the $k$-th particle.  
As we have seen, $L_k$ is the mass of the $k$-th particle; $s_k$ is called the soliton multiplicity. 
It is a \ll solitonic attribute\rr of the particle, which is consistent with the soliton theory mentioned in section \ref{3} (we shall give
examples in the next section).  Although solitons appeared in that theory, they were not linked to solutions of differential equations there.  On the other hand, in the work
of Cecotti and Vafa on the tt* equations, the Coxeter Plane did not play any role.  Our results on
the tt*-Toda equations are therefore providing supporting evidence for the role of solitons
 in both contexts.

It should be noted that the mass $L_k$ depends only on the particle, and is purely Lie-theoretic. 
On the other hand, the 
soliton multiplicity $s_k$ depends both on the particle and on the particular model given by solution of the tt*-Toda equations.

\section{Examples}\label{5}

We present some examples illustrating the particle/soliton structure
and their differential equation origins, in the case $\g=\sl_{n+1}\C$,
i.e.\ for the Lie algebra of type $A_n$.

\subsection{Particles and their masses}\label{5.1}\ 

As explained in section \ref{2.4}, the points of the Coxeter Plane 
can be computed as the complex numbers
\[
(x_i-x_j) (-\diag(1,\om,\dots,\om^n)) = \om^j-\om^i,
\quad
\om=e^{2\pi\i/(n+1)}.
\]
We have $\vert \om^i - \om^j\vert = 2\sin\frac{\vert i-j\vert}{n+1}\pi$,
and this is the mass of the corresponding particle.
The Coxeter element, represented by the permutation $(n\, n-1\, \cdots\, 2 1 0)$,
acts on the Coxeter Plane by rotation through $-\frac{2\pi}{n+1}$.

If $n+1=2m$, there are $m=\tfrac12(n+1)$ wheels, with radii
\[
2\sin\tfrac{\pi}{n+1},2\sin\tfrac{2\pi}{n+1},\dots,2\sin\tfrac{m\pi}{n+1}\ (=2).
\]
Two Coxeter orbits project to each wheel; this is a \ll particle degeneracy\rr in the
Coxeter Plane.

If $n+1=2m+1$, there are $n+1$ wheels, with radii
\[
2\sin\tfrac{\pi}{n+1},2\sin\tfrac{2\pi}{n+1},\dots,2\sin\tfrac{m\pi}{n+1}\ (<2).
\]
In this case each Coxeter orbit in $\De$ projects to a different wheel in the 
Coxeter Plane, but there is a 
\ll mass degeneracy\rrr:  $[ij]$ and $[ji]$ are different orbits of 
the same radius. 
We regard $([ij],[ji])$ as a \ll particle-antiparticle pair\rrr.
(In contrast, when $n+1=2m$, we have $[ij]=[ji]$.)

In both cases, we have $m$ distinct masses
\[
L_i=2\sin \tfrac {i\pi}{n+1}, \quad 1\le i\le m,
\]
which we may regard as associated either to a particle or
a particle-antiparticle pair.   There are also $m$ soliton multiplicities $s_i$ (Stokes numbers),
which we consider next.

\subsection{Solitons}\label{5.2}\ 

The polytopic models of section \ref{3} involve a choice of representation $\th$.
In Example \ref{sl2c} we considered the rather trivial case $\g=\sl_2\C$ and $\th=S^k\la_2$ (its irreducible representations --- the $k$-th symmetric powers of the standard representation on $\C^2$).

For $\g=\sl_{n+1}\C$, it turns out that the representations $\th=\wedge^k\la_{n+1}$
have special significance. They are the basic irreducible representations, and we have seen
already that they arise in the calculation of the Stokes numbers (Definition \ref{groups}). 

Let us examine the proposal of Fendley et al.\  for these representations.  The weights are $x_{i_1}+\cdots+x_{i_k}$, $0\le i_1 < \dots < i_k \le n$, and we
have corresponding weight vectors $e_{i_1}\wedge\dots\wedge e_{i_k}$
(where $e_0,\dots,e_n$ is the standard basis of $\C^{n+1}$). 

Now we project the weights onto the Coxeter Plane. We obtain the points
\[
(x_{i_1}+\cdots+x_{i_k})  (-\diag(1,\om,\dots,\om^n)).
\]
As in the case of the roots, this produces a diagram with spokes and wheels, which depends on our particular $\th$. 

Fendley et al.\ regard the weight vectors as vacuum vectors, and propose that a soliton tunnels between two weight vectors precisely when the difference of the corresponding weights is a (single) root. Regarding the soliton as a particle, its mass (and higher spins) are those of that root --- i.e.\ the
length of the straight line connecting the relevant points in the diagram. 
The theory of section \ref{4.2} (based on the solutions of the tt*-Toda equations) allows us to assign the soliton multiplicity $s_k$ to any soliton of the $k$-th particle type.

\begin{example}\label{k=1} If $k=1$, 
the weights are $x_0,\dots,x_n$ and the diagram consists
of the $(n+1)$-th roots of unity in the complex plane. 
Any (distinct) vacua $e_i$ and $e_j$ are connected by a soliton; that soliton
has the characteristics of a particle of type $x_i-x_j$, with mass $L_{\vert i-j\vert}$. 

The solitons are illustrated in Figure \ref{gr14}, in the case $n+1=4$.  
The projection of the weight $x_i$ is
$x_i(-\diag(1,\om,\om^2,\om^3))$. 
Thus $x_0,x_1,x_2,x_3$ give the complex numbers $-1,-i,1,i$ respectively; they
are denoted by $0,1,2,3$ in the first part of the figure.
As we have seen in Figure \ref{cox4}, there are two particle types, 
\begin{align*}
[01]&= \{ 01, 30, 23, 12 \} = \{ 10, 03, 32, 21 \} = [03]
\\
[02]&=\{ 02, 31, 20, 13 \}.
\end{align*}
The first part of Figure \ref{gr14}
shows the projections of the weights. The second part
shows (as heavy lines) the four solitons of type $[01]$ (with mass $2\sin\frac \pi 4 = \sqrt 2$).
For example, the points denoted $3,2$ are connected by a soliton of type $[01]=[03]$
because $\pm(x_3-x_2)$ is a root in the set $[01]=[03]$.
The third part shows the two solitons of  type $[02]$ (with mass $2\sin\frac \pi 2 = 2$).
In this example, any two points are connected by a soliton.
\qed
\end{example}
\begin{figure}[h]
\begin{center}
\vspace{-2.5cm}
\includegraphics[angle=90,origin=c,scale=0.4, trim= 500 50 0 50]{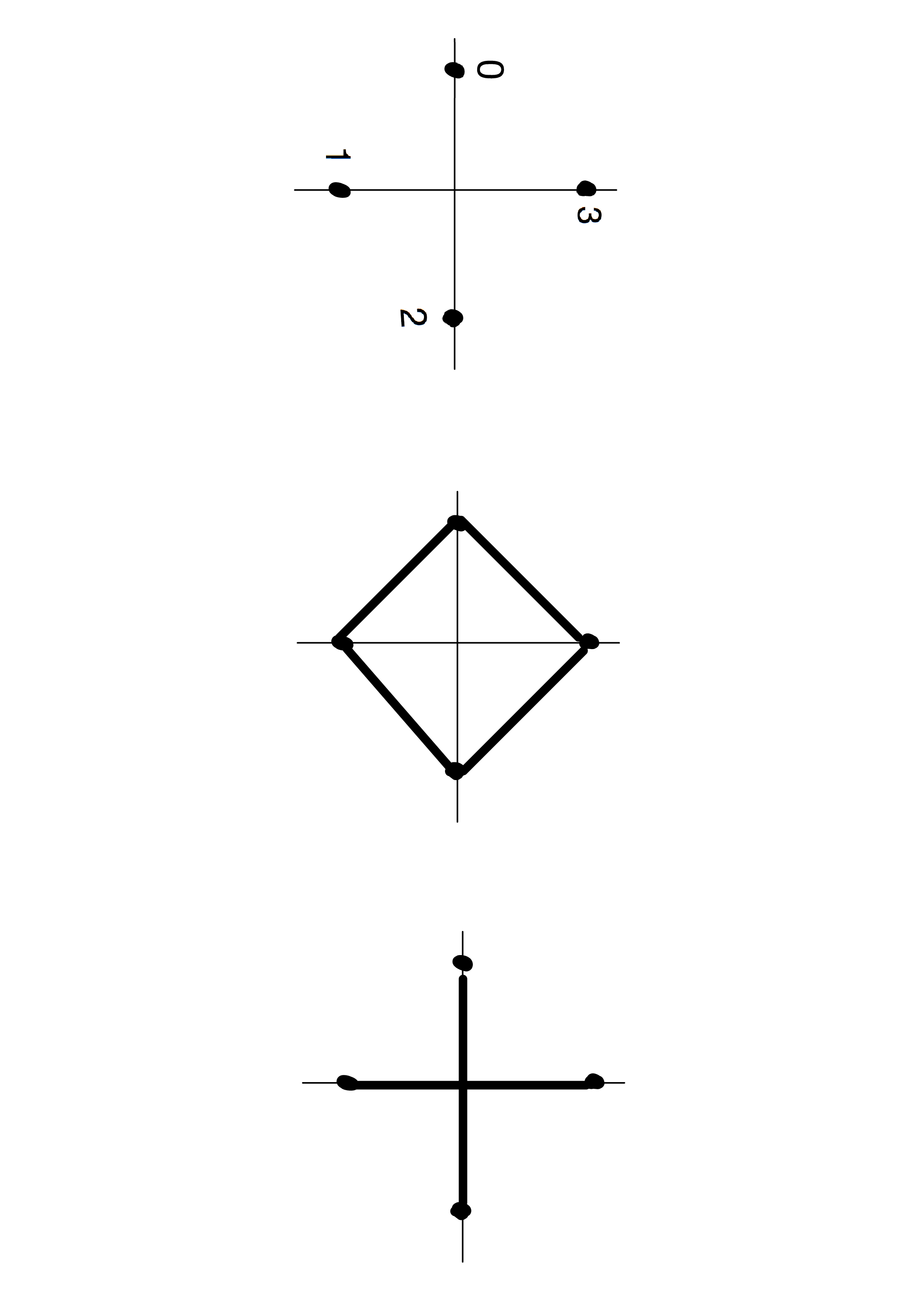}
\end{center}
\caption{Solitons for $\th=\la_{4}
$.}\label{gr14}
\end{figure}

\begin{figure}[h]
\begin{center}
\vspace{5cm}
\includegraphics[angle=270,origin=c,scale=0.4, trim= 500 50 900 50]{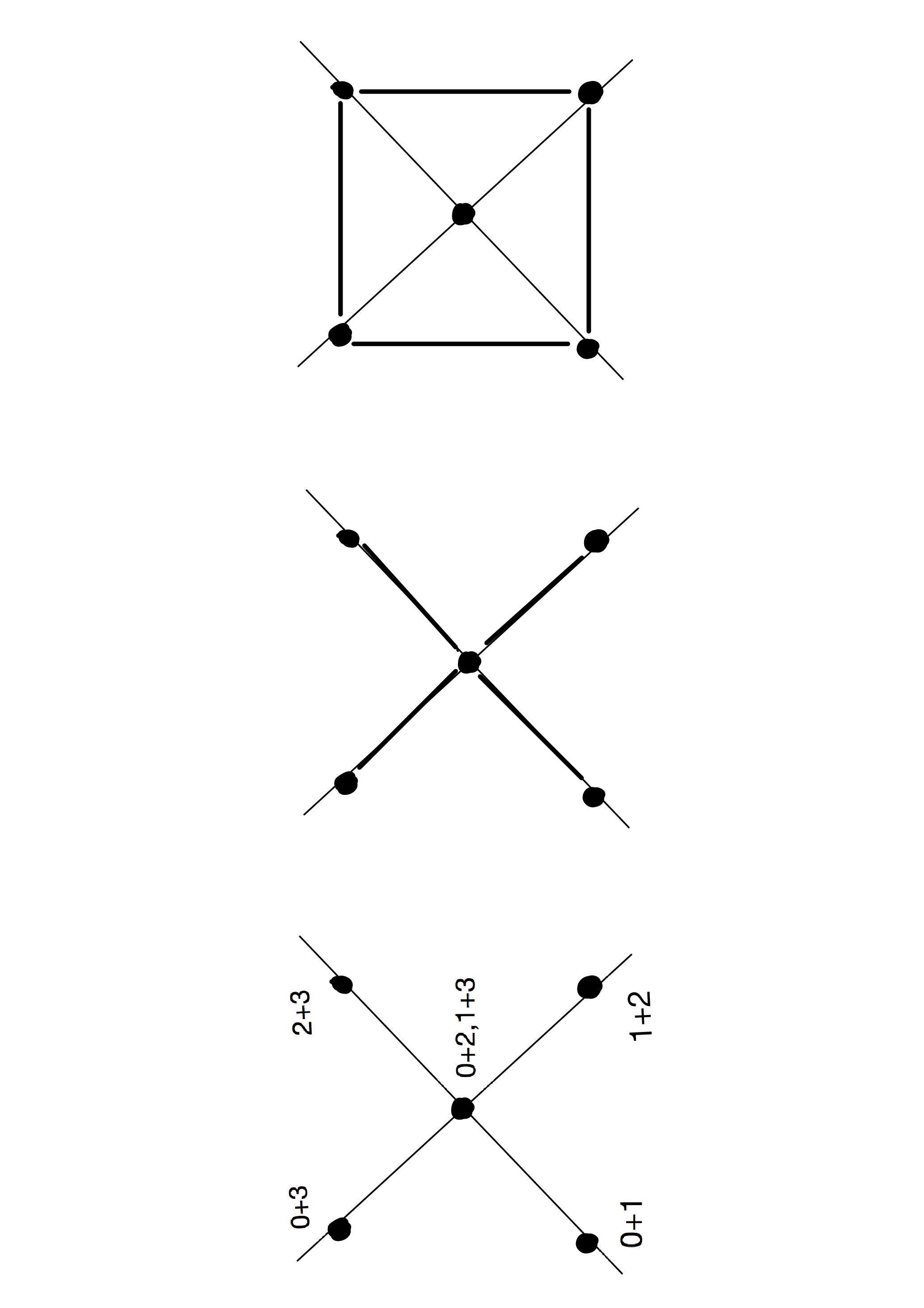}
\end{center}
\caption{Solitons for $\th=\wedge^2\la_{4}
$.}\label{gr24}
\end{figure}

\begin{example}\label{k=2} If $k=2$, 
the weights are $x_i+x_j$ with $0\le i\ne j\le n$.  We obtain the points
$-\om^i -\om^j$ in $\C$. 

The solitons are illustrated in Figure \ref{gr24}, in the case $n+1=4$.  
The first part of the figure
shows the projections of the weights. We denote $(x_i+x_j)(-\diag(1,\om,\om^2,\om^3))$
by $i+j$,  to avoid confusion with the notation $ij$ which means
$(x_i-x_j)(-\diag(1,\om,\om^2,\om^3))$.  
The second part
shows the four solitons of type $[01]$ (with mass $\sqrt 2$). The
third part shows the four solitons of type $[02]$ (with mass $2$). 

The vacua $0+2,1+3$ at the origin are not connected by a soliton, as $x_0+x_2 -(x_1+x_3)$
is not a root.  Neither are the vacua $0+1,2+3$, nor the vacua $0+3,1+2$.
\qed
\end{example}

A more complicated example can be seen in Figure 2 of \cite{Gu21}. All these examples
were described already, in more physical language, in \cite{Bo95a}. Many more are
shown in \cite{LeWa91}.

\subsection{Concrete examples}\label{5.3}\ 

We have been rather evasive about \ll field theories\rr so far, preferring to discuss just
a few ingredients which have mathematical counterparts.  Mathematicians
are advised to regard the particle/mass/soliton framework as a collection of features shared by various 
such theories, rather than a description of a particular theory.  Affine Toda field theory was certainly prominent in the physics papers listed in the references, but many other theories have been
developed since then, and relations between them have been a source of inspiration (and surprises).
With this in mind, we conclude by mentioning two concrete examples, or at least some mathematical aspects of these examples, which illustrate the depth of the framework --- and also its relevance to geometry.  

According to Cecotti and Vafa, the \ll physically realistic\rr solutions of the tt* equations are those with  integer soliton multiplities. In the case of the tt*-Toda equations, where the solutions are as described in section \ref{4.3},  this means that all $s_i\in\Z$.  This is a strong condition, which they proposed and used in \cite{CeVa92} as a way of classifying certain theories.  

In the case of $\g=\sl_{n+1}\C$ there are finitely many solutions (increasing with $n$) which satisfy this condition. We shall give two simple but important examples. A few more examples can be found in \cite{GuLi12}, but the physical/geometrical meaning of most \ll integer Stokes solutions\rr has yet to be investigated.

\no{\em (1) The $A_n$ minimal model}

This is a Landau-Ginzburg theory, one of a family of examples which has an ADE classification
based on Dynkin diagrams of Lie algebras.  These have an important geometrical incarnation: the theory of deformations of singularities (of ADE type) --- see, for example, \cite{Eb07}, \cite{He03}.  

Recall (from section \ref{3}) that our global solutions are parametrized by real matrices
$m=\diag(m_0,\dots,m_n)$ with
\[
m_{i-1}-m_i= \tfrac{n+1}N(k_i+1) - 1 \ge -1
\]
(i.e.\ $k_i\ge-1$).  Let us consider the solution corresponding to
\[
m=-\tfrac1{n+2} \left(\tfrac n2, \tfrac n2-1, \dots, -\tfrac n2\right)
\]
which certainly satisfies the given condition, with
$k_0=1, k_1=\cdots= k_n=0$ and $N=n+2$.

According to our formula (\ref{esf}), the Stokes numbers are
the elementary symmetric functions of the $n+1$ complex numbers
\[
e^{\frac{\pi\i}{n+2} n},
e^{\frac{\pi\i}{n+2} (n-2)},
\dots,
e^{\frac{\pi\i}{n+2} (-n)}.
\]
If $n+1$ is even, these are precisely the $(n+2)$-th roots of $-1$ excluding $-1$ itself. Thus they are the roots of the polynomial
\[
\frac{x^{n+2} + 1}{x+1} = x^{n+1} - x^n + x^{n-1} - \cdots - x + 1.
\]
It follows that all $s_i=1$. 
If $n+1$ is odd, they are the $(n+2)$-th roots of $1$ excluding $-1$, i.e.\  
the roots of the polynomial
\[
\frac{x^{n+2} - 1}{x+1} = x^{n+1} - x^n + x^{n-1} - \cdots + x - 1.
\]
Again we have all $s_i=1$. In view of the one-to-one correspondence between the data 
$(m_0,\dots,m_n)$ at $t=0$ and the data 
$(s_1,\dots,s_n)$ at $t=\infty$, this particular solution is characterized by the natural condition that all soliton multiplicities are $1$.

The data at $t=0$ --- the chiral ring --- has a geometrical interpretation. The space of vacua can be regarded
as the vector space
\[
\C[x]/(f^\pr(x)) \cong \Span\{ 1,x,\dots,x^n\},
\]
the Jacobian ring of the function $f(x)=\frac1{n+2} x^{n+2}$, which
has an isolated singularity of type $A_{n+1}$ at $x=0$. 
It has a deformation (unfolding) given by the family of
functions $f(x,z)= \frac1{n+2} x^{n+2} -zx$, and the Jacobian ring of this family is
\[
\C[x,z]/(x^{n+1}-z).
\]
The matrix of multiplication by $x$ (with respect to the above basis) is
\[
\bp
 & & & z\\
1 & & & \\
  & \ddots & & \\
   & & 1 &
\ep
\]
and this is exactly the matrix $\eta$ from $\hat\om$ (with
$k_0=1, k_1=\cdots= k_n=0$). We ignore the parameters $c_0,\dots,c_n$ here as
they do not affect the isomorphism type of the chiral ring.

\no{\em (2) The Grassmannian sigma model}

This is a nonlinear sigma model, i.e.\ fields are maps from a surface to a
K\"ahler manifold, and the classical equations of motion are the harmonic
map equations, which are the Euler-Lagrange equations for the energy functional.  Holomorphic maps are the harmonic maps with minimal energy (in a given connected component), and
the geometry of moduli spaces of such maps leads in mathematics to the theory of
Gromov-Witten invariants and quantum cohomology of the target space. Calabi-Yau 
manifolds are the most prominent example, but Fano manifolds (such as Grassmannians)
are also important.  

The case of $\C P^n$ first arose in the work of Witten, and Vafa, then Intriligator proposed the
generalization to Grassmannians (see \cite{In91} for this, and further references).  An extensive theory has been developed by algebraic geometers. For this, and some of the relations with
physics,  we refer to \cite{CoKa99}. 

As an example of this situation, we consider  the solution corresponding to
\[
m=-\left(\tfrac n2, \tfrac n2-1, \dots, -\tfrac n2\right).
\]
This corresponds to having
$k_0=0, k_1=\cdots= k_n=-1$. and $N=1$.

By formula (\ref{esf}), the Stokes numbers are just
the elementary symmetric functions of the $n+1$ complex numbers
\[
1,1,\dots,1,
\]
i.e.\ $s_i=\tbinom{n+1}{i}$. 

In the previous example we did not specify a representation $\th$ of $\sl_{n+1}\C$, but the
standard representation $\la_{n+1}$  was implicit. Now we consider the
representation $\th=\wedge^k\la_{n+1}$, in order to relate our solution of the tt*-Toda equations to the \ll polytopic models\rr of section \ref{3}. 

The space of vacua is the vector space $\wedge^k \C^{n+1}$, which can be identified naturally with
the cohomology
\[
H^\ast( Gr_k(\C^{n+1}); \C)
\]
of the Grassmannian (as $Gr_k(\C^{n+1})$ is a \ll minuscule flag manifold\rrr).  The cup
product on this vector space has a deformation given by the quantum product, and the matrix of quantum multiplication by a generator of $H^2( Gr_k(\C^{n+1}); \C)$ is known to be
$\wedge^k z\eta$.  References for these facts are \cite{GoMaXX}, \cite{LTXX}, \cite{KaXX}.

For $k=1$, we have
\[
z\eta=
\bp
 & & & z\\
1 & & & \\
  & \ddots & & \\
   & & 1 &
\ep
\]
which is the well known matrix of quantum multiplication by a generator $x$ of $H^2( \C P^n; \C)$
in the quantum cohomology ring $QH^\ast( \C P^n; \C) \cong \C[x,z]/(x^{n+1}-z)$.  This
is the chiral ring for $k=1$. It is isomorphic to the chiral ring in the previous example.
(Using $z\eta$ instead of $\eta$ is just a matter of interpretation: in the
first example the matrix-valued form $\eta dz$ arises naturally, while $z\eta \frac{dz}z$
arises in the quantum cohomology examples. The relation of this form with $\hat\om$ is explained in section 2 of \cite{GIL3}.)

The fact that the {\em same} solution of the tt*-Toda equations gives rise to (part of) the
quantum cohomology of $Gr_k(\C^{n+1})$ for {\em all} $k$ is related to the
\ll Satake Correspondence\rrr --- see 
\cite{Gu21} for more information and further references.

We remark that most aspects of the Grassmannian example were well known to physicists 30 years ago, as
can be seen from the articles
\cite{Bo95a}, \cite{Bo95b}.  Apart, that is,  from the existence of the corresponding global solutions, which crucially
relate the data at $t=0$ and the data at $t=\infty$, as well as
the rigorous derivation of their asymptotics. Our results fill this gap.

Our results also give a simpler computation of the Stokes data, based on
the Coxeter Plane. In the physics literature, the Stokes data first arose in the context of Landau-Ginzburg models, using techniques of singularity theory (Picard-Lefschetz theory). The Grassmannian example (which is also a Landau-Ginzburg model) provides an example of this. Namely, physicists discovered (\cite{Ge91}, \cite{In91}, \cite{Va92}, \cite{Wi95}) that the quantum cohomology of 
$Gr_k(\C^{n+1})$ has a \ll Landau-Ginzburg presentation\rr
\[
QH^\ast(Gr_k(\C^{n+1});\C) \cong \C[x_1,\dots,x_k]/
(\tfrac{\b W}{\b x_1},\dots,\tfrac{\b W}{\b x_k})
\]
where $W=W(x_1,\dots,x_k)$ is a \ll superpotential\rrr.  Here $x_1,\dots,x_k$ are the Chern classes of the tautologous bundle.   We are omitting explicit mention of the quantum parameter $z$ from the notation.

In terms of the Chern roots $u_1,\dots,u_k$ the superpotential
is just
\[
W(u_1,\dots,u_k)= \tfrac1{n+2}(u_1^{n+2}+\cdots+u_k^{n+2}) - z (u_1+\cdots+u_k).
\]
The critical points of $W$ are the $\tbinom{n+1}{k}$ sets $\{\al_1,\dots,\al_k\}$ where 
$\al_1,\dots,\al_k$ are any $k$ (distinct) $(n+1)$-th roots of $z$ (we assume $z\ne 0$). 
The critical values are the $\tbinom{n+1}{k}$ complex numbers
\[
\tfrac1{n+1} z (\al_1+\cdots+\al_k) - z(\al_1+\cdots+\al_k) = -\tfrac n{n+1} z (\al_1+\cdots+\al_k).
\]
These  critical values  form the points of the physicists' \ll $W$-plane\rrr.  It is (up to scalar multiplication) exactly
the plane of the polytopic model explained in sections \ref{3} and \ref{5.2}. That is, it is the (underlying plane of the) Coxeter Plane with points given by evaluating the (apposition) weights of the representation
$\th=\wedge^k\la_{n+1}$ on $E_+$.

\no{\em Acknowledgements:\  }   This article is based on a talk which was scheduled to be given by the first author at the workshop \ll Representations of Discrete Groups and Geometric Topology on Manifolds\rrr, Josai University, 12-13 March 2020, but postponed due to Covid-19 restrictions.  The authors are grateful to the organisers for the opportunity to submit this version.

{\em
\noindent
Department of Mathematics\newline
Faculty of Science and Engineering\newline
Waseda University\newline
3-4-1 Okubo, Shinjuku, Tokyo 169-8555\newline
JAPAN\newline
\ \newline
Department of Mathematics\newline
National Tsing Hua University\newline
Hsinchu 300\newline
TAIWAN\newline
and\newline
National Center for Theoretical Sciences\newline
Taipei 106\newline
TAIWAN
}

\end{document}